\newtheorem{theorem}{Theorem}
\newtheorem{lemma}[theorem]{Lemma}
\newtheorem{conjecture}{Conjecture}
\title{Ramsey Numbers of Odd Cycles Versus Larger Even Wheels}
\author{Ryan Alweiss\thanks{MIT}}
\date{\today}
\begin{document}
\maketitle

\begin{abstract}

The generalized Ramsey number $R(G_1, G_2)$ is the smallest positive integer $N$ such that any red-blue coloring of the edges of the complete graph $K_N$ either contains a red copy of $G_1$ or a blue copy of $G_2$. Let $C_m$ denote a cycle of length $m$ and $W_n$ denote a wheel with $n+1$ vertices. In 2014, Zhang, Zhang and Chen determined many of the Ramsey numbers $R(C_{2k+1}, W_{n})$ of odd cycles versus larger wheels, leaving open the particular case where $n = 2j$ is even and $k<j<3k/2$. They conjectured that for these values of $j$ and $k$, $R(C_{2k+1}, W_{2j})=4j+1$. In 2015, Sanhueza-Matamala confirmed this conjecture asymptotically, showing that $R(C_{2k+1}, W_{2j}) \le 4j+334$. In this paper, we prove the conjecture of Zhang, Zhang and Chen for almost all of the remaining cases.  In particular, we prove that $R(C_{2k+1},W_{2j})=4j+1$ if $j-k \ge 251$, $k<j<3k/2$, and $j \ge 212299$.

\end{abstract}

\section{Introduction}

Given graphs $G_1$ and $G_2$, we define the \emph{Ramsey number} of $G_1$ and $G_2$ $R(G_1,G_2)$ to be the minimum positive integer $N$ such that a complete graph on $N$ vertices with edges colored red and blue contains a red $G_1$ or a blue $G_2$ as a subgraph.  These generalized Ramsey numbers have been well studied for various pairs of graphs $(G_1,G_2)$.  See \cite{rad} for a rather comprehensive survey on this topic.

First, we introduce some some notation and definitions.  We use $v_1 \ldots v_nv_1$ to denote the cycle $C_n$ with vertices $v_i$ where $1 \le i \le n$.  A \emph{wheel} $W_n$ is a graph on $n+1$ vertices $v_0, v_1, \ldots, v_n$ so that the induced subgraph on $v_i$ with $i>0$ is the cycle $C_n$, and also $v_0$ is adjacent to $v_i$ for each $1 \le i \le n$.  The vertex $v_0$ is called the \emph{hub} of the wheel.  For every graph $G$ and $v \in V(G)$, we use $N(v)$ to denote the set of vertices incident to $v$, and its elements are \emph{neighbors} of $v$.  A graph is \emph{connected} if there is a path between any two vertices.  A graph is \emph{2-connected} if it is connected and is not disconnected by the removal of any vertex.  A \emph{2-edge-colored graph} is a graph $H$ with each of its edges colored red or blue.  We use $H^R$ to denote the subgraph of $H$ formed by the red edges of $H$.  Similarly, $H^B$ denotes the subgraph of $H$ formed by the blue edges of $H$.  If $H$ is a set of vertices, then $H^R$ is defined to be $(G[H])^R$ and $H^B$ is defined to be $(G[H])^B$.  Given a $2$ coloring of $H$, we define $\delta^R(H)=\delta(H^R)$ and $\delta^B(H)=\delta(H^B)$.  Furthermore, the \emph{circumference} $c(H)$ is the length of the largest cycle of $H$, where we take the convention that $c(H)=\infty$ if the graph is acyclic.  If there exists a pair of integers $(x_1,x_2)$ with $3 \le x_1 \le x_2$ so that a graph $H$ has a cycle of length $x$ if and only if $x_1 \le x \le x_2$ then $H$ is \emph{weakly pancyclic}.  A weakly pancyclic graph $H$ containing a triangle and a cycle of length $|H|$ is \emph{pancyclic}.

It is natural to try to compute $R(C_m,W_n)$.  In 2014, Zhang, Zhang, and Chen showed that $R(C_m,W_n) \le \max \{ 2n+1, 3m-2 \}$ when $m \ge 5$ is odd and $n>m$ \cite{zhang}.  Furthermore, they proved $R(C_m,W_n)=2n+1$ for $m$ odd, $n \ge 3(m-1)/2$, except for $(m,n)=(3,3)$.  They also showed that $R(C_m,W_n)=3m-2$ when $m$ and $n$ are both odd and $m<n \le 3(m-1)/2$.  These two results show that the bound $\max \{ 2n+1, 3m-2 \} $ is optimal for all cases where $m \ge 5$ is odd and $n>m$, except the case where $m$ is odd, $n$ is even, and $m<n<3(m-1)/2$.  They conjectured that $R(C_m,W_n)=2n+1$ where $m$ is odd, $n$ is even, and $m<n<3(m-1)/2$.  This would show that the bound is optimal in this final case as well.  Letting $m=2k+1$ and $n=2j$, we have the main conjecture studied in this paper.

\begin{conjecture} \emph{\cite{zhang}}
For $k<j<3k/2$, $R(C_{2k+1},W_{2j})=4j+1$
\end{conjecture}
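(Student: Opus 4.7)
The lower bound $R(C_{2k+1},W_{2j})\ge 4j+1$ is witnessed on $4j$ vertices by partitioning the vertex set into two classes of size $2j$ and coloring the edges within each class blue and between the classes red: the red graph is $K_{2j,2j}$, bipartite and hence $C_{2k+1}$-free, while the blue graph is $2K_{2j}$, whose largest component has only $2j$ vertices and so contains no $W_{2j}$. Thus the task reduces to the matching upper bound $R(C_{2k+1},W_{2j})\le 4j+1$. Fix a 2-edge-coloring of $K_N$ with $N=4j+1$, assume the red subgraph contains no $C_{2k+1}$, and try to locate a blue $W_{2j}$; the argument splits naturally into (i) find a blue $C_{2j}$, and (ii) find a blue hub for it.

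For stage (i), $C_{2k+1}$-freeness of the red graph together with Erd\H{o}s--Gallai-type bounds for odd cycles forces the red subgraph to have at most $N^2/4 + O(N)$ edges, so the blue subgraph is dense. Iteratively discarding vertices of small blue degree yields a set $S$ with $|S|\ge 2j$ on which $S^B$ is 2-connected, non-bipartite, and satisfies $\delta^B(S)\ge (1+o(1))|S|/2$. Bondy's theorem then implies $S^B$ is weakly pancyclic, and a lower bound on its circumference supplies a blue cycle of every length from $3$ up to at least $2j$; in particular a blue $C_{2j}$, and indeed many essentially distinct such cycles to feed into stage (ii).

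For stage (ii), fix a blue $C := v_1 v_2 \cdots v_{2j} v_1$ and let $U := V(K_N)\setminus V(C)$, so $|U| = 2j+1$. If some $u\in U$ has all-blue edges to $V(C)$ we are done; otherwise every $u\in U$ contributes at least one red edge into $V(C)$, giving at least $2j+1$ red edges between $U$ and $V(C)$. A rotation/extension argument then combines these red edges (possibly chained through $U$) with blue arcs of $C$, iteratively swapping arcs to produce a red cycle whose length I can tune to exactly $2k+1$. Here $k<j<3k/2$ controls the ratio of the two cycle lengths, $j-k\ge 251$ supplies the slack needed to fine-tune the length and parity of the red cycle, and $j\ge 212299$ absorbs error terms from the degree and stability bounds used earlier.

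The main obstacle is stage (ii), which naturally splits into two regimes. When the red graph is close to bipartite the bound $N=4j+1$ is actually favourable: by pigeonhole one side of the near-bipartition has size at least $2j+1$, so the blue subgraph there contains a copy of $K_{2j+1}$ and hence $W_{2j}$. When the red graph is far from bipartite but still $C_{2k+1}$-free it must contain many odd cycles of lengths other than $2k+1$, and the rotation argument must combine red edges into a cycle of exactly the right length without inadvertently being trapped at some nearby parity or length. The explicit constants $251$ and $212299$ in the statement should be read as the quantitative cost of this length control.
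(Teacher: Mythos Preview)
First, a framing point: the paper does \emph{not} prove Conjecture~1. It proves only the partial result Theorem~2, under the extra hypotheses $j-k\ge 251$ and $j\ge 212299$, and the full conjecture remains open. Your lower-bound construction agrees with the paper's. The substantive comparison is between your upper-bound sketch and the paper's proof of Theorem~2.

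Your plan reverses the paper's order of operations, and that reversal is where the gap lies. The paper fixes the hub \emph{first}: it shows $\Delta^B(G)\ge 3j-250$ (Lemma~11), takes a vertex $v$ of maximum blue degree, sets $H=G[N^B(v)]$, and then devotes the rest of the argument to producing a blue $C_{2j}$ \emph{inside} $H$. Once that cycle is found, $v$ is automatically a valid hub. You instead propose to find an arbitrary blue $C_{2j}$ and then search for a hub among the $2j+1$ remaining vertices. Your stage~(ii) argument for this is not an argument: if no outside vertex is all-blue to $V(C)$, you have secured only that each outside vertex sends \emph{at least one} red edge into $V(C)$ --- a total of $2j+1$ red edges with no incidence structure whatsoever. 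A red $C_{2k+1}$ needs $2k+1\sim \tfrac{4}{3}j$ red edges arranged in a single cycle, and ``combining with blue arcs of $C$'' cannot contribute to a \emph{red} cycle. No rotation/extension scheme turns $2j+1$ scattered red edges into a red $(2k+1)$-cycle, and the slack $j-k\ge 251$ is irrelevant here.

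Stage~(i) is also looser than you claim. Even granting the density bound, iteratively deleting low-blue-degree vertices does not obviously leave $\ge 2j$ vertices with blue minimum degree $\ge |S|/2$, nor does it yield $2$-connectedness or non-bipartiteness of $S^B$ for free. The paper never attempts this; instead, inside $H$ it uses Ramsey numbers of even cycles (Theorem~8) to force a long \emph{red} cycle $C$, applies Fan's circumference theorem (Theorem~5) to $H^B$ to extract two special vertices $a,b$ of low blue degree joined by a red edge, and then performs a delicate parity/shift analysis (Lemmas~19--26) of the red/blue pattern of edges from $a,b$ and from an auxiliary red clique $J\subset H'$ into $V(C)$ to force $c(H^B)\ge 2j$. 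The constants $251$ and $212299$ come from the accumulated losses in that counting (via Theorem~3 and the cascade of $\pm 500,\pm 1000,\ldots$ bounds), not from any ``length-tuning'' step of the kind you describe.
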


A graph with $4j$ vertices consisting of two disjoint blue copies of $K_{2j}$ with all red edges between them lacks both a red $C_{2k+1}$ and a blue $W_{2j}$.  Thus $4j+1$ is a clear lower bound for this Ramsey number.  The $k=1$ and $k=2$ cases of this conjecture were resolved in \cite{kone} and \cite{ktwo} respectively, but prior to 2015 very little headway had been made on this problem.

In 2015, Sanhueza-Matamala made significant progress toward settling this conjecture. 

\begin{theorem} \emph{\cite{san}} For $2<k<j$, $R(C_{2k+1},W_{2j}) \le 4j+334$.
\end{theorem}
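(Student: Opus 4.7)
My plan is to argue by contradiction: suppose that $K_N$ with $N = 4j + 334$ is 2-edge-colored with no red $C_{2k+1}$ and no blue $W_{2j}$, and derive a contradiction. The argument proceeds in three phases: establishing high blue minimum degree, then locating a blue rim $C_{2j}$ via Bondy's pancyclicity theorem, and finally locating a hub by applying Bondy again inside a blue neighborhood.

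First, I would establish that every vertex has very high blue degree. For any $v$, a red path on $2k$ vertices inside $N^R(v)$ closes through $v$ into a red $C_{2k+1}$, so $G^R[N^R(v)]$ contains no $P_{2k}$. The Erd\H{o}s--Gallai theorem then bounds the red edges inside $N^R(v)$, and the extremal characterization forces the red subgraph on $N^R(v)$ to look essentially like a disjoint union of small red cliques, making the blue graph on $N^R(v)$ a nearly-complete multipartite graph with small parts. Pushing this structure through an induction or direct Ramsey-type construction, I would show that if $d^R(v)$ exceeds $2j$ by a suitable constant, a blue $W_{2j}$ already sits inside $N^R(v)$. This forces $d^R(v) \le 2j + O(1)$ for every $v$, so $\delta^B \ge 2j + 167 = N/2$; the additive $334$ is engineered for precisely this.

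Second, with $\delta^B \ge N/2$ on $N$ vertices, Dirac's theorem gives a blue Hamiltonian cycle, and Bondy's pancyclicity theorem yields the dichotomy that $G^B$ is either pancyclic or isomorphic to $K_{N/2,N/2}$. In the latter case the red graph consists of two disjoint red cliques of size $N/2 = 2j+167 > 2k+1$, each of which contains a red $C_{2k+1}$, contradicting our assumption. So $G^B$ is pancyclic and in particular contains a blue $C_{2j}$. To produce a blue $W_{2j}$ I still need to find this blue $C_{2j}$ inside $N^B(v)$ for some $v$. I would apply the same Dirac--Bondy argument to $G^B[N^B(v)]$: it has at least $2j + c$ vertices and inherits high blue minimum degree from the global bound (since each of its vertices has small red degree by step one). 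The Bondy dichotomy then gives pancyclicity of $G^B[N^B(v)]$, hence a blue $C_{2j}$ inside $N^B(v)$, completing the wheel — unless the bipartite extremal case occurs.

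The main obstacle, and the reason the additive slack $334$ appears, is precisely this bipartite extremal case in each application of Bondy: when $G^B$ or some $G^B[N^B(v)]$ is close to balanced bipartite, the whole coloring resembles the lower-bound construction of two blue $K_{2j}$'s joined by red edges. Dealing with this requires a stability argument showing that the extra $334$ vertices provide enough room to either break the balance or find a blue $W_{2j}$ inside one of the nearly-complete blue sides directly. Ensuring that the error terms accumulated from Erd\H{o}s--Gallai, from the Bondy dichotomy, and from the induced-neighborhood degree computations all fit under a single universal additive constant is the principal quantitative challenge, and is what determines the specific numerical value $334$.
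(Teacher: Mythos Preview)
Your first phase contains a genuine gap. The Erd\H{o}s--Gallai bound on the red graph inside $N^R(v)$ does not force a blue $W_{2j}$ there when $k$ is close to $j$. Concretely: take $d^R(v)=m$ with $2j<m<j+2k$ (a nonempty range whenever $k>j/2$, which covers most of the theorem's scope), and let the red graph on $N^R(v)$ be a single clique $K_{2k-1}$ together with $m-2k+1$ red-isolated vertices. This has no red $P_{2k}$, and together with $v$ the longest red cycle has length $2k$. The blue complement on $N^R(v)$ is a split graph---a blue clique of size $m-2k+1$ completely joined to a blue-independent set of size $2k-1$---and for $m<j+2k$ one checks directly that no vertex sees a blue $C_{2j}$ in its blue neighbourhood, so no blue $W_{2j}$ sits in $N^R(v)$. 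Hence ``$d^R(v)\le 2j+O(1)$'' does not follow, and the $\delta^B\ge N/2$ threshold needed for Dirac--Bondy is never reached. A second gap appears in your third phase: even granting $\delta^B(G)\ge N/2$, a vertex $w\in N^B(v)$ may have up to $N-|N^B(v)|$ of its blue neighbours outside $N^B(v)$, so the inherited blue minimum degree inside $G^B[N^B(v)]$ could be as low as $|N^B(v)|-N/2$, far short of the $|N^B(v)|/2$ that Bondy requires.

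The paper does not itself prove this theorem---it is quoted from \cite{san}---but the lemmas it reproduces (Lemmas~9--17, each flagged ``cf.\ Lemma~$X$ of \cite{san}'') trace the intended argument, and it is structurally different from yours. One does \emph{not} bound $\delta^B$; one bounds $\Delta^B$. If every vertex had blue degree below roughly $3j$, then $\delta^R>|G|/4+250$, $G^R$ is $2$-connected and non-bipartite, and the Brandt--Faudree--Goddard weak-pancyclicity theorem (Theorem~3) together with Fan's circumference theorem (Theorem~5) yields a red $C_{2k+1}$. Hence some $v$ has $|N^B(v)|\gtrsim 3j$; one sets $H=N^B(v)$ and argues, via Brandt's edge-count pancyclicity theorem (Theorem~6) and cycle-versus-cycle Ramsey numbers (Theorem~8), that $H^B$ is weakly pancyclic with circumference at least $2j$, producing a blue $C_{2j}$ inside $H$ and hence a blue $W_{2j}$. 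The constant $334$ is dictated by the $n/4+250$ hypothesis of Theorem~3, not by any Dirac or Bondy threshold.
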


In this paper, we prove a stronger result, which more or less settles the conjecture.

\begin{theorem} $R(C_{2k+1},W_{2j})=4j+1$ if $j \ge 212299$, $j-k \ge 251$, and $k<j<3k/2$.
\end{theorem}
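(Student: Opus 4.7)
The plan is to argue by contradiction: suppose $K_{4j+1}$ admits a 2-edge-coloring $H$ containing no red $C_{2k+1}$ and no blue $W_{2j}$, and derive a contradiction from the structural constraints this places on $H^R$ and $H^B$. The lower bound $R(C_{2k+1},W_{2j})\ge 4j+1$ follows from the two-disjoint-blue-$K_{2j}$ construction noted after the conjecture, so only the upper bound needs to be established.

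The first step is to reduce to the regime of large blue neighborhoods. Writing $N^B(v)$ for the blue neighborhood of $v$, a blue $W_{2j}$ consists of a vertex $v$ together with a blue $C_{2j}$ inside $N^B(v)$, so the absence of a blue $W_{2j}$ forces, for every vertex $v$, either $|N^B(v)| < 2j$ or $(H^B)[N^B(v)]$ contains no $C_{2j}$. If many vertices had $|N^B(v)|<2j$, they would have red degree at least $2j+1$, and a large 2-connected non-bipartite block of $H^R$ on these vertices would, by Bondy-type weak-pancyclicity results, contain every cycle length in a long interval --- in particular $C_{2k+1}$, since under $k<j<3k/2$ the target $2k+1$ sits strictly between $3$ and the natural circumference of such a block. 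So one is driven into the regime where almost all vertices $v$ satisfy $|N^B(v)|\ge 2j$ and the blue subgraph on $N^B(v)$ has $c((H^B)[N^B(v)]) \ne 2j$.

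Next, I would apply Kopylov / Erd\H{o}s--Gallai-type extremal bounds: a 2-connected graph on $m$ vertices of circumference less than $2j$ has substantially fewer than $\binom{m}{2}$ edges, and one of bounded circumference is either small, has a small vertex cut, or is sparse. Applied to $(H^B)[N^B(v)]$, these bounds force many red edges inside each such neighborhood. Aggregating over the high-blue-degree vertices $v$ gives a dense red subgraph on a large vertex set, and the same weak-pancyclicity machinery applied to a 2-connected non-bipartite red block yields a red $C_{2k+1}$, again because $2k+1 \in (2j/3,2j)$ sits comfortably inside the range of cycle lengths realized by a dense non-bipartite block. The hypothesis $j-k\ge 251$ supplies the quantitative slack needed to absorb error terms from the Kopylov bounds, while the threshold $j\ge 212299$ absorbs additive constants of the kind appearing in the Sanhueza-Matamala bound $4j+334$.

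The main obstacle will be the near-extremal case, in which the coloring looks close to the lower-bound construction: $V(K_{4j+1})$ splits into parts $A,B$ with $|A|,|B|\approx 2j$, $H^B[A]$ and $H^B[B]$ are nearly complete, and the $A$--$B$ edges are almost all red. Since $|A|+|B|=4j+1$, one part has at least $2j+1$ vertices, and inside it either the near-complete blue structure yields a blue $C_{2j}$ --- giving a blue $W_{2j}$ with hub in that part --- or there is a red edge inside the part. Such a red edge, combined with red alternating paths in the nearly-complete bipartite red graph between $A$ and $B$, builds red cycles of odd length $2\ell+1$ for a range of $\ell$; the delicate task is to realize a red cycle of exactly length $2k+1$, not merely some odd length near $2k+1$. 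This requires careful handling of the few wrong-color defects in the bipartite red graph, and the gap hypothesis $j-k\ge 251$ is what gives enough room to route a red alternating path of the right length and parity. Tightening this near-extremal analysis so as to reduce Sanhueza-Matamala's additive slack of $334$ down to $0$ is, I expect, the principal technical contribution and the main obstacle.
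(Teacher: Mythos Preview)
Your plan diverges from the paper at almost every structural step, and the pieces you propose do not obviously assemble into a proof.

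First, the reduction. You aim for a threshold $|N^B(v)|\ge 2j$, but the paper needs and obtains $|N^B(v)|\ge 3j-250$ for the vertex $v$ of maximum blue degree (Lemma~11). The argument is: if $\Delta^B(G)\le 3j-251$ then $\delta^R(G)\ge j+251$, whence $G^R$ is $2$-connected and non-bipartite, and Fan's circumference theorem together with the Brandt--Faudree--Goddard weak-pancyclicity theorem forces a red $C_{2k+1}$. Your version (``if many vertices have $|N^B(v)|<2j$\ldots'') does not work as stated, because restricting to the low-blue-degree vertices destroys the minimum red degree hypothesis you need for pancyclicity.

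Second, and more seriously, the heart of the paper is not a Kopylov/Erd\H{o}s--Gallai density argument, nor a stability analysis against the two-blue-cliques extremal configuration. Once $H=N^B(v)$ with $|H|\ge 3j-250$ is fixed, the paper shows $H^B$ is $2$-connected, weakly pancyclic with a triangle, and has $c(H^B)<2j$. Fan's theorem then produces two specific vertices $a,b\in H$ with blue degree below $j$ joined by a red edge. A large red cycle $C\subset H\setminus\{a,b\}$ of length $2j-502$ is extracted via $R(C_{2j-502},C_{2j})$, and the remainder of the proof is a delicate parity-and-shift analysis of the colour pattern of edges from $a$ and $b$ to $C$ (Lemmas~19--24): because a red edge from $a$ to $x$ and another to $x+(2k{-}1)$ would close a red $C_{2k+1}$, the indicator sequences $A_x,B_x$ nearly satisfy recurrences like $A_x=1-A_{x+2k-1}$, $A_x=1-B_{x+2k-2}$, etc., forcing $A_x\approx A_{x+1}\approx B_x$ up to $O(1)$ exceptions. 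This yields large sets $\mathcal{B}_2\subset V(C)$ and $H'\subset H\setminus V(C)$ with all cross-edges blue, plus auxiliary vertices of $C$ with blue edges into $\mathcal{B}_2$, which are then stitched (Lemmas~25--26) into a blue $C_{2j}$ inside $H$, hence a blue $W_{2j}$.

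None of this machinery appears in your plan. The near-extremal picture you sketch (two near-complete blue parts with red between) is the lower-bound construction, but the paper never performs a stability analysis against it; the contradiction is obtained entirely inside a single blue neighbourhood via the $a,b$--to--$C$ edge analysis. The hypotheses $j-k\ge 251$ and $j\ge 212299$ enter not as slack for Kopylov-type bounds but as the precise margins needed to make the shift recurrences and the final cycle-assembly (Lemma~26) go through.
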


Note that this proves the conjecture of Zhang, Zhang, and Chen in almost all cases.  In particular, if $j-k \ge 70766$ then $j>3(j-k) \ge 212298$, so $j \ge 212299$, $j-k \ge 251$, so Theorem 2 applies and therefore Conjecture 1 is true in this case.  Thus the only cases left unsolved have $j=k+O(1)$.  The conjecture is for $k<j<3k/2$, and so the cases left unsolved are a negligible subset of all cases.

In order to prove Theorem 2, we will first build up much of the machinery from \cite{san}.  The paper is organized as follows.  In the classical results section, we will state some classical theorems from the literature related to pancyclicity and circumference.  Then, in the preliminary lemmas section, we will find a vertex with large blue degree, and use this vertex to construct a graph $H$.  Assuming that there is no red $C_{2k+1}$ or blue $W_{2j}$, we will use then prove that $H^B$, the blue subgraph of $H$, has small circumference.  In the next section, we will then use one of the classical theorems, due to Fan \cite{fan}, as well as bounds on Ramsey numbers to find a large red cycle.  This yields enough structure to ultimately derive a contradiction, which we do in the final section.

\section{Classical Results}

There are a few results from graph theory which we will use throughout this paper.  The first of these results, due to Brandt et al., explains the presence of the number $251$ in Theorem 2, and in fact it also explains the presence of the number $334$ in Theorem $1$.

\begin{theorem}\emph{\cite{brandt1}} If all vertices of a non-bipartite $2$-connected graph on $n$ vertices have degree at least $\frac{n}{4}+250$, the graph is either weakly pancyclic or it has all cycle lengths between $4$ and $c(G)$ except length $5$. \end{theorem}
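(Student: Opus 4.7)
The plan is to follow the Bondy paradigm for pancyclicity results: locate a very long cycle, use chord-and-detour arguments to exhibit cycles of every intermediate length, and then classify the extremal obstructions that can block any particular length. Given that $G$ is $2$-connected with $\delta(G) \ge n/4 + 250$, I would first apply a Dirac/Bondy-style theorem to produce a longest cycle $C$ and set $\ell = c(G)$; the minimum-degree bound forces $\ell$ to be comparable to $n$, so most of the spectrum of cycle lengths we are trying to fill lies ``inside'' $C$ rather than being limited by the circumference.

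Next, I would study chords of $C$ together with short detours through vertices off $C$. The degree hypothesis guarantees that every vertex has at least $n/4 + 250$ neighbors, and a counting argument shows that for each target length $m$ well below $\ell$ one can find two chords, or one chord plus a short detour through an off-cycle vertex, whose combined lengths cut out a sub-cycle of exactly length $m$. Running this argument across the spectrum produces every length $m$ in some interval $[x_1, c(G)]$, which is exactly weak pancyclicity unless the lower endpoint $x_1$ exceeds the girth $g(G)$.

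The main obstacle, and the source of both the additive constant $250$ and the mysterious exception at length $5$, is the lower end of the cycle spectrum. If some length $m \in [4, c(G)]$ is missing, a stability-type analysis should force $G$ into one of a very rigid family of extremal structures: either $G$ is bipartite (which is excluded by hypothesis and would otherwise account for missing odd lengths), or $G$ is a pentagon-free configuration in which the short chord patterns that would produce a $C_5$ are structurally blocked. The hardest step is showing that in the non-bipartite, pentagon-free case, the chord-and-detour arguments used to produce $C_m$ for $m \ne 5$ still go through without implicitly relying on a $C_5$. This case analysis is delicate, and the constant $250$ plays the role of the safety margin needed to absorb error terms arising from the chord-counting arguments and from switching between chord- and detour-based constructions of the shorter cycles.
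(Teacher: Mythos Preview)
The paper does not prove this theorem at all: it is quoted as a classical result from Brandt, Faudree, and Goddard \cite{brandt1} and used as a black box, so there is no ``paper's own proof'' to compare against. What you have written is not a proof but a plausible-sounding outline of the kind of argument that \emph{might} work, and at this level of vagueness it cannot be checked.

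More concretely, the genuine difficulties in the Brandt--Faudree--Goddard argument are precisely the steps you wave through. Saying ``a counting argument shows that for each target length $m$ well below $\ell$ one can find two chords\dots whose combined lengths cut out a sub-cycle of exactly length $m$'' is the entire content of the theorem, not a step in its proof; the actual paper devotes many pages to controlling the chord structure and to the case analysis near the bottom of the cycle spectrum. Likewise, your explanation of the constant $250$ as a ``safety margin'' and of the $C_5$ exception as coming from ``pentagon-free configurations'' is not an argument but a restatement of the theorem's conclusion. If you want to supply an independent proof you would need, at minimum, a precise lemma producing a $C_m$ from specified chord/degree data, an explicit accounting of where the $n/4$ threshold and the additive $250$ enter, and a concrete description of the extremal graphs witnessing the missing $C_5$. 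As written, there is no verifiable mathematical content here.
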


We use a slightly modified version of this result.

\begin{lemma}
If all vertices of a non-bipartite $2$-connected graph on $n$ vertices have degree at least $\frac{n}{4}+250$, the graph has all cycles lengths between $6$ and $c(G)$.
\end{lemma}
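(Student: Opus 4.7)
The plan is to deduce this directly from Theorem 3 of Brandt et al. That theorem offers two alternatives for a graph $G$ satisfying our hypothesis: either (a) $G$ is weakly pancyclic, or (b) $G$ contains cycles of every length in $\{4\} \cup [6, c(G)]$. In case (b) we already have cycles of every length in $[6, c(G)]$ and there is nothing more to prove, so the substantive work is all in case (a).

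In case (a), by the paper's definition of weakly pancyclic the set of cycle lengths of $G$ is an interval of the form $[g, c(G)]$, where $g$ is the girth. To conclude, I would show $g \le 4$, which of course suffices. For this I would run the standard Moore-style bound: if $g \ge 5$ then for any vertex $v$ the set $N(v)$ is independent (no triangle), and for distinct $u, u' \in N(v)$ the sets $N(u) \setminus \{v\}$ and $N(u') \setminus \{v\}$ are disjoint and disjoint from $N(v)$ (no $4$-cycle). Thus
\[
n \ge 1 + |N(v)| + \sum_{u \in N(v)} \bigl(|N(u)| - 1\bigr) \ge 1 + d + d(d-1) = d^2 + 1,
\]
where $d \ge n/4 + 250$ is the minimum degree. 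This yields $n \ge (n/4+250)^2 + 1 = n^2/16 + 125n + 62501$, which is impossible for any $n \ge 1$. Hence $g \le 4$, and so cycles of all lengths in $[4, c(G)] \supseteq [6, c(G)]$ exist.

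One small caveat is that the conclusion is vacuous unless $c(G) \ge 6$, but this is automatic from the hypothesis: in a $2$-connected graph the circumference is at least $\min(2\delta, n)$ by a classical result of Dirac, and here $2\delta \ge n/2 + 500$ is comfortably larger than $6$. I do not anticipate any serious obstacle, since the lemma is essentially a mild repackaging of Theorem 3 combined with the elementary observation that such a large minimum degree forces very short cycles; the only thing to be careful about is not inadvertently using a bound (such as $n \ge d^2+1$) outside a regime where it holds, which is why I would state the Moore bound with explicit constants as above.
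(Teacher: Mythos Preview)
Your proof is correct and follows essentially the same route as the paper: invoke Theorem~3, dispose of alternative~(b) immediately, and in the weakly pancyclic case show the girth is small by a counting argument that contradicts the degree hypothesis. The only difference is in the final counting step---the paper picks four arbitrary vertices and uses inclusion--exclusion together with the no-$C_4$ condition (any two vertices share at most one common neighbor) to obtain $n \ge 4(n/4+250)-6=n+994$, whereas you run the standard Moore bound from a single vertex, using the absence of both $C_3$ and $C_4$, to get the stronger quadratic inequality $n \ge d^2+1$; either argument works comfortably here.
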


\begin{proof}
Assume this is not the case.  By Theorem $3$ we can assume the graph is weakly pancyclic.  Thus all cycles of the graph have length at least $7$, and the degree condition implies that $n \ge 4$.  Now consider $4$ vertices $v_1$, $v_2$, $v_3$, and $v_4$ of this graph.  If two of these vertices $v_i$ and $v_j$, have at least two common neighbors $a$ and $b$ then $v_iav_jb$ is a $4$-cycle, a contradiction.  Thus, for all $i<j$ we have $|N(v_i) \cap N(v_j)| \le 1$.  Then using the principle of inclusion-exclusion, $n \ge |\bigcup_{i=1}^{4} N(v_i)| \ge \sum_{i=1}^{4}|N(v_i)|-\sum_{1 \le i<j \le 4}|N(v_i) \cap N(v_j)| \ge 4(\frac{n}{4}+250)-6=n+994$, a contradiction.
\end{proof}

We also employ the next result, due to Fan \cite{fan}, a short proof of which can be found in \cite{feng}. This is the aforementioned result about the circumference, and it will be crucial later in the proof.

\begin{theorem}
\emph{\cite{fan}} Let $G$ be a $2$-connected graph on $n$ vertices.  Furthermore, suppose that for all pairs of nonadjacent vertices $v_1$ and $v_2$ which have a common neighbor, at least one of them has degree at least $\frac{c}{2}$.  Then $c(G) \ge \min \{c,n\}$.
\end{theorem}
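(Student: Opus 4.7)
The statement is Fan's classical theorem on circumferences, and my plan is a standard longest-cycle / rotation argument combined with the one-sided degree hypothesis.

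First, I would argue by contradiction: let $C$ be a longest cycle in $G$, set $\ell = |C|$, and assume $\ell < \min\{c,n\}$. Since $\ell < n$, there is a vertex $x \notin V(C)$ and, by $2$-connectedness of $G$, there are two internally vertex-disjoint paths from $x$ to distinct vertices $v_a$ and $v_b$ on $C$. Write $Q$ for the resulting $v_a$--$v_b$ path through $x$, of length at least $2$.

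Second, I would extract a high-degree vertex from the hypothesis. Let $x'$ be the neighbor of $v_a$ on $Q$; then $x'$ is nonadjacent to $v_{a+1}$, for otherwise replacing the edge $v_a v_{a+1}$ of $C$ by the detour $v_a x' v_{a+1}$ would produce a strictly longer cycle. The pair $(x', v_{a+1})$ has the common neighbor $v_a$, so the Fan hypothesis yields $\max\{d(x'), d(v_{a+1})\} \ge c/2$. The symmetric argument at $v_b$ and on the other side of each attachment gives several vertices in the neighborhood of the attachment points that are candidates for having degree $\ge c/2$.

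Third, I would run a P\'osa-style rotation to exploit such a high-degree vertex $y$. Each neighbor of $y$ on $C$ corresponds to a possible rotation of the Hamilton path of $C\setminus\{yz\}$ starting at $y$, producing a large family of longest paths whose endpoints collectively see many vertices of $G$. Combining these rotations with the detour $Q$ through the off-cycle vertex $x$ should yield either a cycle strictly longer than $C$ (contradicting maximality of $C$) or a cycle of length $\ge c$ (contradicting $\ell < c$).

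The main obstacle is orchestrating the rotations when the degree hypothesis is only \emph{one-sided}: at each rotation step the new endpoint may be the ``low-degree'' member of a new nonadjacent pair with a common neighbor, so one must re-apply the hypothesis to a different pair and track which endpoint retains the $\ge c/2$ degree bound. Fan handles this by choosing an extremal longest path in an auxiliary structure and re-running the argument; a clean short rendition appears in \cite{feng}, which I would follow rather than reinvent.
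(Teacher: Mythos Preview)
The paper does not prove this statement at all: Theorem~5 is quoted from the literature as Fan's theorem \cite{fan}, with the remark that ``a short proof of which can be found in \cite{feng}.'' There is therefore no proof in the paper to compare your proposal against. Your sketch is a reasonable outline of the standard longest-cycle plus rotation approach to Fan's theorem, and you yourself note that you would follow \cite{feng}; but for the purposes of this paper the result is simply cited, not re-proved, so no argument is expected here.
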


We will use another pancyclicity result due to Brandt.

\begin{theorem}

\emph{\cite{brandt2}} Let $G$ be a non-bipartite graph on $n$ vertices with more than $\frac{(n-1)^2}{4}+1$ edges. Then $G$ is weakly pancyclic and contains a triangle.
\end{theorem}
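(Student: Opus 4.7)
The theorem has two parts: $G$ contains a triangle, and $G$ is weakly pancyclic. The plan is to establish them in that order, since having a triangle pins the shortest cycle length at $3$ and gives the base case for the weak pancyclicity claim.

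For the triangle part, I would argue by contradiction. Suppose $G$ is triangle-free. Non-bipartiteness gives an odd cycle; let $C = v_1 v_2 \cdots v_{2\ell+1} v_1$ be a shortest odd cycle, so $\ell \ge 2$. Minimality forces $C$ to be induced, since any chord would yield either a shorter odd cycle or a triangle. A parity argument then shows that each $u \in V(G) \setminus V(C)$ has at most two neighbors on $C$, and any two such neighbors must sit at cyclic distance exactly $2$ (other configurations produce either a triangle or a shorter odd cycle through $u$). Partitioning edges of $G$ into those inside $V(C)$, those between $V(C)$ and its complement, and those inside the complement, and applying Tur\'an's theorem to the triangle-free subgraph $G[V(G) \setminus V(C)]$, I would obtain
\[
e(G) \;\le\; (2\ell+1) \;+\; 2(n - 2\ell - 1) \;+\; \left\lfloor \tfrac{(n-2\ell-1)^2}{4} \right\rfloor.
\]
For $\ell = 2$ this simplifies exactly to $\lfloor (n-1)^2/4 \rfloor + 1$, and for $\ell \ge 3$ the right-hand side is strictly smaller. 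Either way, this contradicts the hypothesis $e(G) > (n-1)^2/4 + 1$.

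For the weakly pancyclic part, set $c = c(G)$. Since $G$ now has a triangle, the aim is cycles of every length in $[3,c]$. I would suppose $\ell \in [4, c-1]$ is the smallest length for which $G$ has no $C_\ell$, and derive a contradiction. Using minimality of $\ell$ together with $\ell + 1 \le c$, I would pick a cycle $C'$ of length $\ell + 1$. Counting edges incident to $V(C')$ against the density hypothesis, I would then argue that either $C'$ has a chord that shortcuts it to a $C_\ell$, or some vertex outside $C'$ has two neighbors on $C'$ at cyclic distance $2$ (again closing a $C_\ell$). Exhibiting either contradicts the assumption that $C_\ell$ is absent.

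The main obstacle will be the weak pancyclicity step. The edge bound is tight, as witnessed by the triangle-free non-bipartite extremal graph $K_{\lfloor (n-1)/2 \rfloor, \lceil (n-1)/2 \rceil}$ with a pendant edge attached, which attains exactly $\lfloor (n-1)^2/4 \rfloor + 1$ edges. Consequently, the chord-or-external-vertex dichotomy has to be carried out with no slack, and separate analyses are likely needed at the boundary cases $\ell = 4$ and $\ell = c - 1$, as well as when the local structure around $C'$ mirrors the extremal bipartite-plus-pendant configuration. The triangle step, by comparison, is essentially a clean edge count made to line up with the bound by the fact that the shortest odd cycle in the non-bipartite extremal configuration has length $5$.
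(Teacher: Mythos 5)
This statement is Theorem 6 of the paper, which is quoted from Brandt \cite{brandt2} and not proved here, so there is no in-paper proof to compare against; I will assess your argument on its own. Your first half, the existence of a triangle, is correct and essentially complete: a shortest odd cycle $C$ of length $2\ell+1\ge 5$ in a triangle-free non-bipartite graph is induced, every vertex outside $C$ sends at most two edges to $V(C)$ and only to vertices at cyclic distance $2$, and Mantel's bound applied to $G[V(G)\setminus V(C)]$ gives $e(G)\le(2\ell+1)+2(n-2\ell-1)+(n-2\ell-1)^2/4$, which equals $(n-1)^2/4+1$ at $\ell=2$ and is smaller for $\ell\ge3$, contradicting the hypothesis. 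One correction: your claimed tight example, $K_{\lfloor(n-1)/2\rfloor,\lceil(n-1)/2\rceil}$ with a pendant edge, is bipartite and therefore does not satisfy the hypotheses at all; the correct witness is $K_{\lfloor(n-1)/2\rfloor,\lceil(n-1)/2\rceil}$ with one edge subdivided, which is non-bipartite, triangle-free, and has exactly $\lfloor(n-1)^2/4\rfloor+1$ edges.

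The weak-pancyclicity half has genuine gaps as sketched. First, if $\ell$ is the smallest missing cycle length, minimality only guarantees cycles of every length below $\ell$ together with one of length $c(G)$; nothing produces a cycle of length exactly $\ell+1$, so the cycle $C'$ you propose to work with need not exist (you would have to start from the shortest cycle of length greater than $\ell$ and control its length). Second, the edge hypothesis is global while your dichotomy is local to $C'$: a graph can exceed the bound while containing an induced long cycle with no useful attachments, all excess edges being concentrated elsewhere, so "counting edges incident to $V(C')$ against the density hypothesis" does not go through for an arbitrary choice of $C'$. Third, even granting a $C'$ of length $\ell+1$, an external vertex with two neighbors at cyclic distance $2$ closes cycles of lengths $4$ and $\ell+1$, not $\ell$; you need distance $3$ for an external vertex, or distance $2$ for a chord, to produce a $C_\ell$. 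Brandt's actual proof is not a local shortcut argument around one cycle; it proceeds globally, deleting vertices of small degree while preserving the hypotheses and ultimately appealing to minimum-degree pancyclicity results in the spirit of Theorem 7. So your write-up establishes the triangle clause but not weak pancyclicity, which is the substantive part of the theorem.
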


We will also use yet another pancyclicity result due to Bondy.

\begin{theorem}

\emph{\cite{bondy}} Let $G$ be a graph on $n \ge 3$ vertices such that all vertices of $G$ have degree at least $\frac{n}{2}$.  Then either $G$ is pancyclic or $n$ is even and $G=K_{n/2, n/2}$ is a complete bipartite graph.

\end{theorem}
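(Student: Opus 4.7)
The plan is to derive Bondy's theorem as a consequence of Dirac's Hamilton cycle theorem together with a chord-counting argument along the Hamilton cycle. First I would invoke Dirac's classical result ($\delta(G) \ge n/2$ implies Hamiltonicity) to secure a Hamilton cycle $C = v_1 v_2 \cdots v_n v_1$ in $G$; this already yields a cycle of length $n$. The degree hypothesis also gives $|E(G)| \ge n^2/4$ by handshaking, which is the quantitative hook for the rest of the argument.

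Next, assuming for contradiction that $G$ is not pancyclic, fix the smallest $k \in \{3,\ldots,n-1\}$ for which $G$ has no $k$-cycle. I would then analyze chords of $C$: a chord $v_i v_j$ with $i<j$ splits $C$ into two cycles of lengths $(j-i+1)$ and $(n-j+i+1)$, and combinations of two chords produce further intermediate cycle lengths. The absence of a $k$-cycle translates into a forbidden pattern of chord placements around $C$.

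To convert this into an edge bound, I would use a rotation/rerouting argument on consecutive pairs $v_i,v_{i+1}$ on $C$. Concretely, if $v_j \in N(v_i)$ and $v_{j+1} \in N(v_{i+1})$ with $j$ chosen so that the rerouted cycle has length exactly $k$, we manufacture the missing $k$-cycle. Hence the no-$k$-cycle assumption forces a disjointness condition between shifted neighborhoods, yielding $\deg(v_i) + \deg(v_{i+1}) \le n$ for every consecutive pair. Summing over $i$ gives $2|E(G)| = \sum_v \deg(v) \le n^2/2$, i.e.\ $|E(G)| \le n^2/4$. Combined with the lower bound from Step~1 this forces equality throughout: every consecutive pair satisfies $\deg(v_i)+\deg(v_{i+1})=n$, $G$ is $n/2$-regular, and the disjointness conditions can be satisfied only when $G$ is bipartite. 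A bipartite $n/2$-regular graph on $n$ vertices with $\delta \ge n/2$ must be $K_{n/2,n/2}$, and this case indeed misses all odd cycle lengths, so the exceptional case is genuine and the analysis closes.

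The main obstacle will be the careful bookkeeping in the rotation step: for each prospective missing length $k$, one must choose the right ``shift'' so that the rerouted cycle has length exactly $k$ and so that the resulting neighborhood disjointness gives precisely $\deg(v_i)+\deg(v_{i+1})\le n$. Boundary cases need special care, namely $k=3$ (triangles, where the chord argument degenerates and one must argue directly about common neighbors of adjacent vertices on $C$), $k$ near $n$ (where rerouting leaves little slack), and the even-versus-odd parity of $k$ (since the extremal case $K_{n/2,n/2}$ legitimately lacks only odd lengths, so the forcing must be sharp enough to detect that any non-bipartite graph admits all lengths).
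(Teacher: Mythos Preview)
The paper does not supply its own proof of this statement; Theorem~7 sits in the ``Classical Results'' section and is simply quoted from Bondy's 1971 paper, to be invoked later as a black box (in the proof of Lemma~9). So there is no in-paper argument to compare your proposal against.

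For what it is worth, your outline is essentially Bondy's original strategy: Dirac gives the Hamilton cycle and the handshake bound $|E(G)|\ge n^2/4$, and then a chord-shifting count along the Hamilton cycle shows that the absence of any single cycle length forces $|E(G)|\le n^2/4$, with equality only for $K_{n/2,n/2}$. The plan is sound; the bookkeeping you flag in the rotation step is precisely where the work lies, and one clean way to organize it (as Bondy does) is to show first that a Hamiltonian graph with $|E(G)|\ge n^2/4$ missing a $C_{n-1}$ must already be $K_{n/2,n/2}$, and then iterate downward rather than fixing the smallest missing length from the outset.
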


Finally, we employ a result about Ramsey numbers of pairs of even cycles due to Faudree and Schelp.

\begin{theorem} \emph{\cite{faudree}} If $b \ge a \ge 3$ then $R(C_{2a},C_{2b})=2b+a-1$. \end{theorem}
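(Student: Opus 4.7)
For the lower bound $R(C_{2a},C_{2b})\ge 2b+a-1$, the plan is to exhibit the standard two-block construction on $K_{2b+a-2}$: partition the vertex set as $A\cup B$ with $|A|=a-1$ and $|B|=2b-1$, and color every edge incident to $A$ red and every edge inside $B$ blue. The blue subgraph is $K_{2b-1}$, so it contains no $C_{2b}$. In the red subgraph, $B$ is independent, so any red cycle must place a vertex of $A$ between consecutive $B$-vertices; hence every red cycle has length at most $2|A|=2a-2<2a$.

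For the upper bound I would argue by contradiction. Suppose $K_N$ with $N=2b+a-1$ is $2$-colored with no red $C_{2a}$ and no blue $C_{2b}$; the plan is to use the pancyclicity machinery from the Classical Results section to force one of the forbidden cycles in whichever color class is denser. Counting edges, some color class, say blue, has at least $\binom{N}{2}/2>(N-1)^2/4+1$ edges. If the blue graph is non-bipartite, Brandt's theorem (Theorem 6) makes it weakly pancyclic with a triangle, so it has cycles of every length from $3$ up to its circumference; it then suffices to show the blue circumference is at least $2b$, which I would obtain from Fan's theorem (Theorem 5) after locating a vertex (or dense subgraph) of sufficient blue degree. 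If the blue graph is bipartite with parts of sizes $r\le s$, then either $r\ge b$, in which case the bipartite structure alone forces $K_{r,s}\supseteq K_{b,b}\supseteq C_{2b}$ to be blue; or $r\le b-1$, in which case $s\ge b+a\ge 2a$, and since blue has no edges inside either part, the red graph contains $K_r\cup K_s\supseteq K_{2a}\supseteq C_{2a}$. In the residual case where the blue graph has circumference less than $2b$, the Erd\H{o}s--Gallai bound forces blue to be sparse, so red absorbs most edges and one repeats the argument in red to obtain $C_{2a}$.

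The main obstacle I expect is matching the exact vertex count $N=2b+a-1$. Coarse extremal tools (Bondy--Simonovits for $C_{2b}$-free graphs, or Erd\H{o}s--Gallai for circumference) easily yield an upper bound of the form $R(C_{2a},C_{2b})=O(a+b)$, but not the sharp $2b+a-1$. The delicate part will be a stability-type argument: when the blue graph has no $C_{2b}$ but is near-extremal, its vertex set should decompose essentially as in the lower-bound construction, with one ``blue-clique'' block of roughly $2b-1$ vertices and a small complementary block, and symmetrically for red. Reconciling these forced structures against $N=2b+a-1$ should yield a vertex whose red and blue neighborhoods between them close up a forbidden cycle, producing the desired contradiction.
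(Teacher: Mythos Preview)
The paper does not prove this statement: Theorem~8 is quoted from Faudree and Schelp \cite{faudree} and used as a black box (in Lemmas~12, 14, 16 and in the construction of the cycle $C$). There is no proof in the paper to compare your attempt against.

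On the merits of your sketch: the lower-bound construction is correct and standard. The upper-bound argument, however, is only an outline, and you rightly flag that the sharp constant is the hard part. One concrete error: in the bipartite sub-case you write ``the bipartite structure alone forces $K_{r,s}\supseteq K_{b,b}\supseteq C_{2b}$ to be blue'', but blue being bipartite with parts of sizes $r\le s$ does not make blue equal to $K_{r,s}$; it only makes the red graph contain the disjoint cliques $K_r$ and $K_s$. From that you get a red $C_{2a}$ only when $s\ge 2a$, and when $r,s\le 2a-1$ (which is compatible with $r+s=2b+a-1$) neither branch of your dichotomy fires. More broadly, the pancyclicity and circumference tools you invoke (Theorems~5--7) give bounds of the right order, but the exact value $2b+a-1$ in Faudree--Schelp is obtained by a fairly intricate induction with case analysis on the structure of a longest monochromatic path or cycle; the density-counting route you propose will not by itself reach the sharp constant, which you already anticipate in your final paragraph.
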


\section{The Main Proof}

\subsection{Preliminary Lemmas}

Throughout the remainder of the paper, we will argue by contradiction, assuming that some graph $G$ on $4j+1$ vertices is such that $G^R$ does not contain $C_{2k+1}$ and that $G^B$ does not contain $W_{2j}$, under the assumption $k+250<j<3k/2$ and $j \ge 212299$.  Note we may assume that $k>\frac{2j}{3}>141533$, so $k \ge 141534$.  So throughout the proof we will assume \[ k+250<j<3k/2, j \ge 212299, k \ge 141534 \tag{JK}.\] The general strategy for this section will be to emulate the methods of \cite{san}, and prove a number of lemmas about $G$ which are identical to or analogous to the corresponding lemmas of \cite{san}.  Later, we will use these results along with Theorem 5 to prove Theorem 2.

The first of these lemmas is about $2$-connectivity, a condition which appears prominently in Theorems 4 and 5 of the last section.  It is identical to Lemma 27 of \cite{san} but we reproduce a proof here for completeness.

\begin{lemma} If $\delta^R(G) \ge j$, then $G^R$ is $2$-connected.  \end{lemma}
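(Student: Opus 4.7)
The plan is to suppose for contradiction that $G^R$ is not 2-connected and derive either a red $C_{2k+1}$ or a blue $W_{2j}$. Non-2-connectedness yields a set $S \subseteq V(G)$ with $|S| \le 1$ such that $G^R - S$ has components $V_1, \ldots, V_m$ with $m \ge 2$; by the condition $\delta^R(G) \ge j$, each $|V_i| \ge j+1$ when $S = \emptyset$ and $|V_i| \ge j$ when $S = \{x\}$ is a cut vertex, and every edge between two distinct components is blue.

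If $m \ge 3$, I would pick $v$ in a minimum-size component $V_1$. Then $v$ is blue-adjacent to every vertex outside $V_1 \cup S$, and for any two other components $V_2, V_3$ the blue graph contains a complete bipartite $K_{|V_2|, |V_3|}$ with both sides of size $\ge j$, hence a blue $C_{2j}$. Using $v$ as hub yields a blue $W_{2j}$, a contradiction. So we may assume $m = 2$ with $|V_1| \le |V_2|$, which forces $|V_2| \ge 2j$.

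I would then split according to whether some $v \in V_2$ has blue degree at least $j$ inside $V_2$. If yes, take such a $v$ as hub: it is blue-adjacent to all of $V_1$ and to $\ge j$ vertices in $V_2$, and these two sets form a complete blue bipartite structure containing a blue $C_{2j}$, yielding the forbidden $W_{2j}$. Otherwise every $v \in V_2$ has red degree within $V_2$ at least $|V_2| - j \ge |V_2|/2$, so by Bondy's theorem (Theorem~7) applied to $V_2^R$, either $V_2^R$ is pancyclic, producing a red $C_{2k+1}$ since $|V_2| \ge 2j \ge 2k+1$, or $V_2^R \cong K_{|V_2|/2, |V_2|/2}$.

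The remaining bipartite subcase is the main obstacle. The blue-degree bound $|V_2|/2 - 1 < j$ forces $|V_2| = 2j$, which (since $|V(G)| = 4j+1$ and $|V_1| \le |V_2|$) is incompatible with the disconnected case $|V_2| \ge 2j+1$, so we must be in the cut-vertex setting with $|V_1| = 2j$ and $S = \{x\}$; writing $V_2 = X \cup Y$ with $|X| = |Y| = j$, $X$ and $Y$ become blue cliques. I would locate a blue path $u_{j-1} u_j u_{j+1}$ in $V_1$: if no such $P_3$ existed, $V_1^B$ would be a matching, forcing $V_1^R$ to have minimum degree $\ge 2j - 2 > j$, and Bondy applied to $V_1^R$ (the $K_{j,j}$ alternative being ruled out by the large minimum degree) would yield pancyclicity and hence a red $C_{2k+1}$. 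Finally, for any $v \in X$ with $j - 1$ distinct vertices $x_1, \ldots, x_{j-1} \in X \setminus \{v\}$ and $j - 2$ further distinct vertices $u_1, \ldots, u_{j-2} \in V_1 \setminus \{u_{j-1}, u_j, u_{j+1}\}$, I can assemble the cycle $x_1 u_1 x_2 u_2 \cdots x_{j-1} u_{j-1} u_j u_{j+1} x_1$ of length $2j$, every edge of which is either a blue cross edge between $V_1$ and $X$ or one of the two blue edges of the $P_3$; since $v$ is blue-adjacent to each $u_i$ (cross edges) and to each $x_i$ (blue clique $X$), this gives a blue $W_{2j}$, the final contradiction.
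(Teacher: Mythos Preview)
Your proof is correct and follows essentially the same route as the paper: reduce to two components, apply Bondy's theorem (Theorem~7) to the larger one to force $V_2^R \cong K_{j,j}$ and hence $|V_1|=|V_2|=2j$, then use a blue $P_3$ in the other component (or its absence, via Bondy again) to build the blue $W_{2j}$ with hub in one part of the bipartition. The only cosmetic differences are your explicit tracking of the $S=\emptyset$ versus cut-vertex cases and your more detailed description of the final $C_{2j}$; the paper's version is slightly terser but the argument is the same.
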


\begin{proof} Assume the contrary, so there exists a subset $S$ of the vertices of $G$ with $|S| \le 1$ such that $(G-S)^R$ is disconnected.  Note that $\delta^R(G-S) \ge \delta^R(G)-|S| \ge \delta^R(G)-1 \ge j-1$.  It follows that every connected component of $(G-S)^R$ has size at least $j$.  

Clearly, any edge containing vertices in different components must be blue.  If $(G-S)^R$ has at least three blue components, then there is a blue cycle of length $2j$ involving two of these components.  Any vertex from a third component is incident to all these $2j$ vertices with blue edges, so $G$ has a blue $W_{2j}$.  This is a contradiction.  Thus $(G-S)^R$ has exactly two components.  These two components have at least $4j$ total vertices.  Denote the components as $C_1$ and $C_2$ so that $|C_1| \ge |C_2|$, and $|C_1| \ge 2j$.  Now, if $\delta^R(G[C_1])<|C_1|-j$ then there exists a vertex $v$ of $G[C_1]$ such that $j$ other vertices of $G[C_1]$ as well as $j$ vertices of $G[C_2]$ are in its blue neighborhood.  This then forms a blue $W_{2j}$, a contradiction.  Thus $\delta^R(G[C_1]) \ge |C_1|-j \ge |C_1|/2$.  By Theorem 7 $(G[C_1])^R$ is either pancyclic or complete bipartite with its parts of equal size.

Since $|C_1| \ge 2j \ge 2k+1$, if $(G[C_1])^R$ is pancyclic the graph contains a red $C_{2k+1}$, a contradiction.  Thus $(G[C_1])^R$ is complete bipartite with parts of equal size.  So $\delta^R(G[C_1])=|C_1|-j=|C_1|/2$.  Thus $|C_1|=2j$.  It follows $|C_2|=2j$ as well.  Now, pick a vertex $v$ of $G[C_1]$, which we try to use as the hub of a wheel.  It connects with blue edges to $j-1$ vertices of $G[C_1]$ and all $2j$ vertices of $G[C_2]$.  Now, if any vertex $v'$ of $G[C_2]$ has blue edges to vertices $x$ and $y$ of $G[C_2]$, then this forms a blue $W_{2j}$ with a hub at $v$.  Thus $\Delta^B(G[C_2]) \le 1$.  Hence $\delta^R(G[C_2]) \ge 2j-2>j$, so by Theorem 7 $(G[C_2])^R$ is pancyclic and contains a cycle of length $2k+1$, a contradiction. \end{proof}

\begin{lemma} (cf. Lemma 28 of \cite{san}) $G^R$ is not bipartite. \end{lemma}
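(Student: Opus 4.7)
The plan is to derive a direct contradiction by pigeonholing on a bipartition of $G^R$. Suppose for contradiction that $G^R$ is bipartite, and fix any bipartition $V(G) = A \sqcup B$ of $G^R$, so every red edge has one endpoint in $A$ and the other in $B$ (any vertex incident to no red edge can be placed in either part arbitrarily). Since $|A|+|B| = 4j+1$, one of the parts, which I may as well call $A$, satisfies $|A| \ge \lceil (4j+1)/2 \rceil = 2j+1$.

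The key observation is then that no red edge lies inside $A$, so every edge of $G[A]$ is blue; that is, $G[A]^B$ is a blue complete graph on at least $2j+1$ vertices. Since $W_{2j}$ has exactly $2j+1$ vertices, $W_{2j} \subseteq K_{2j+1} \subseteq G[A]^B$, producing a blue copy of $W_{2j}$ in $G$ and contradicting the standing assumption that $G^B$ has no $W_{2j}$.

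There is really no obstacle here: the proof is only a few lines, uses none of the quantitative hypothesis (JK) beyond $j \ge 1$, and is essentially identical to Lemma 28 of \cite{san}. The single content point is the trivial fact that a bipartition of an odd-order vertex set has a side of size at least $\lceil n/2 \rceil$, which for $n = 4j+1$ already accommodates a blue wheel $W_{2j}$ inside the blue clique on that side.
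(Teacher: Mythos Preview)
Your proof is correct and essentially identical to the paper's own argument: assume $G^R$ is bipartite, use $|G|=4j+1$ to get a part of size at least $2j+1$, and observe that this part induces a blue clique containing $W_{2j}$. The paper's proof is just a terser version of what you wrote.
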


\begin{proof} 

If this is not the case, because $|G|=4j+1$ one part of the partition has size at least $2j+1$.  Thus $G^B$ contains a complete graph on at least $2j+1$ vertices and has a blue $W_{2j}$, a contradiction.

\end{proof}

\begin{lemma} (cf. Lemma 29 of \cite{san}) $\Delta^B(G) \ge 3j-250$. \end{lemma}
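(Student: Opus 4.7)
The plan is to argue by contradiction: assume $\Delta^B(G) \le 3j-251$ and derive a red $C_{2k+1}$, contradicting the standing hypothesis on $G$. Since $|G|=4j+1$ so every vertex has total degree $4j$, the assumption gives $\delta^R(G) \ge 4j-(3j-251) = j+251$. This bound is tailored precisely to match the hypothesis of Lemma 1 with $n = 4j+1$: indeed $\frac{n}{4}+250 = j + \frac{1}{4} + 250 < j+251$, so every vertex of $G^R$ has red degree at least $\frac{n}{4}+250$.

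Next I would collect the structural properties of $G^R$ needed to invoke Lemma 1. By Lemma 2 (which only needs $\delta^R(G)\ge j$), $G^R$ is $2$-connected, and by Lemma 3 it is non-bipartite. Lemma 1 then applies, telling us that $G^R$ has all cycle lengths between $6$ and $c(G^R)$. So it remains only to show that the circumference $c(G^R)$ is at least $2k+1$.

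To produce a long red cycle, I would apply Fan's theorem (Theorem 5) to $G^R$ with the choice $c = 2j+502$. Since every vertex of $G^R$ has degree at least $j+251 = c/2$, the hypothesis of Theorem 5 is trivially satisfied (for every pair of nonadjacent vertices with a common neighbor, both endpoints already meet the degree bound). Combined with $2$-connectedness, Theorem 5 yields
\[
c(G^R) \ge \min\{2j+502,\; 4j+1\} = 2j+502,
\]
where the minimum is $2j+502$ because $(JK)$ gives $j \ge 212299 > 251$.

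Finally, I would combine the two results. Because $k < j$ by $(JK)$, we have $2k+1 \le 2j-1 < 2j+502 \le c(G^R)$, while $2k+1 \ge 6$ is immediate from $k \ge 141534$. Thus $2k+1$ lies in the range $[6, c(G^R)]$ guaranteed by Lemma 1, so $G^R$ contains a cycle of length $2k+1$ — contradicting the assumption that $G^R$ has no red $C_{2k+1}$. The main obstacle is not any single step but rather aligning the constants: one must verify that the specific threshold $3j-250$ produces a red minimum degree matching exactly the $\frac{n}{4}+250$ threshold of Lemma 1, and that Fan's theorem can be invoked with a value of $c$ large enough to overshoot $2k+1$. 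Both reduce to straightforward arithmetic given $(JK)$.
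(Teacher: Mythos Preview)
Your proof is correct and follows essentially the same argument as the paper: assume $\Delta^B(G)\le 3j-251$ to get $\delta^R(G)\ge j+251$, use the $2$-connectivity and non-bipartiteness lemmas together with the weak-pancyclicity consequence of Brandt--Faudree--Goddard, and then apply Fan's theorem with $c=2j+502$ to force $c(G^R)\ge 2k+1$. The only difference is cosmetic (your lemma numbering differs from the paper's), and your write-up makes the arithmetic checks slightly more explicit.
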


\begin{proof} 

Assume for contradiction that all vertices of $G$ have blue degree at most $3j-251$.  Since $G$ has $4j+1$ vertices, we get that $\delta^R(G) \ge j+251$. So by Lemma 9, we have that $G^R$ is $2$-connected.  Now, we have that $\delta^R(G) \ge j+251>\frac{4j+1}{4}+250$.  By Lemma 10, $G^R$ is not bipartite.  Hence, by Lemma 4, $G^R$ contains all cycles of length between $6$ and $c(G^R)$.  Because $G^R$ is $2$-connected with all degrees at least $j+251$, certainly for any pair of nonadjacent vertices $v_1$ and $v_2$ with a common neighbor, at least one of them has degree $j+251$.  Then taking $c=2j+502$ in Theorem $5$, we have $c(G^R) \ge 2j+502>2k+1$.  Thus, $G^R$ has a red cycle of length $2k+1$, a contradiction. \end{proof}

As in \cite{san}, we take a vertex $v$ of maximum blue degree, and let $H$ be the graph induced by $N^B(v)$.  We have that \[ |H| \ge 3j-250\tag{1} \]

by Lemma 11.  For most of the rest of the proof, we will focus on $H$ and we will show it has either a red $C_{2k+1}$ or a blue $C_{2j}$.  Although $R(C_{2k+1},C_{2j})>3j-250$, we will prove a series of lemmas about $H$ and will ultimately show it has a red $C_{2k+1}$ or a blue $C_{2j}$. 

\begin{lemma} (cf. Lemma 30 of \cite{san}) $c(H^R)>2k+1$. \end{lemma}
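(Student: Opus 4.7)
The plan is to apply Theorem 8 (Faudree--Schelp) directly to the 2-colored complete graph on the vertex set of $H$. The key idea is that although, as the text points out, the odd-even Ramsey number $R(C_{2k+1},C_{2j})$ is too large to fit inside $|H|$, the corresponding \emph{even-even} Ramsey number $R(C_{2k+2},C_{2j})$ is small enough; and for the purpose of bounding $c(H^R)$ from below by $2k+2$, exhibiting a red $C_{2k+2}$ is just as good as exhibiting a red $C_{2k+1}$.

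Concretely, I would set $a = k+1$ and $b = j$ in Theorem 8. The hypotheses $b \ge a \ge 3$ hold because (JK) yields $j \ge k+251 \ge k+1 = a$ and $k \ge 141534$ gives $a \ge 3$, so the theorem produces $R(C_{2k+2},C_{2j}) = 2j + k$. Combining (1) with $j - k \ge 251$ from (JK), I would then observe
\[
|H| \;\ge\; 3j - 250 \;=\; (2j + k) + (j - k - 250) \;\ge\; 2j + k + 1 \;>\; R(C_{2k+2}, C_{2j}),
\]
so the 2-coloring restricted to $H$ must contain either a red $C_{2k+2}$ or a blue $C_{2j}$.

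A blue $C_{2j}$ would live inside $H = N^B(v)$, and together with $v$ as hub would form a blue $W_{2j}$ in $G$, contradicting the standing assumption. Therefore $H$ must contain a red $C_{2k+2}$, whence $c(H^R) \ge 2k+2 > 2k+1$.

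There is essentially no hard step: the argument reduces to the single numerical check $3j - 250 \ge 2j + k$, which is precisely where the hypothesis $j - k \ge 251$ is used (with one unit of slack), plus the trivial observation that a blue cycle of length exactly $2j$ inside $N^B(v)$ completes to a blue wheel with hub $v$.
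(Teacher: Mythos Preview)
Your proof is correct and follows essentially the same approach as the paper: apply Theorem~8 with $a=k+1$, $b=j$ to get $R(C_{2k+2},C_{2j})=2j+k$, verify via $(JK)$ and $(1)$ that $|H|$ exceeds this, and rule out the blue $C_{2j}$ alternative by completing it to a blue $W_{2j}$ with hub $v$. The paper's writeup is terser but the logic is identical.
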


\begin{proof}  Note that $R(C_{2k+2},C_{2j})=2j+k<3j-250$ due to Theorem 8, using $(JK)$.  As such, if $c(H^R) \le 2k+1$ then $H$ does not contain a red copy of $C_{2k+2}$, so it contains a blue copy of $C_{2j}$.  But then with $v$ this forms a blue copy of $W_{2j}$, a contradiction.   \end{proof}

\begin{lemma} (cf. Lemma 31 of \cite{san}) $H^B$ is not bipartite.  \end{lemma}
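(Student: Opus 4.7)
The plan is to assume for contradiction that $H^B$ admits a bipartition $V(H) = A \sqcup B$, and to derive a contradiction by case analysis on the red cross-edges between $A$ and $B$. Since every edge of $H$ lying inside $A$ (or inside $B$) is not blue and hence must be red, each of $A$ and $B$ induces a red clique in $H$. A red $K_n$ contains cycles of every length from $3$ to $n$, so if $|A| \ge 2k+1$ or $|B| \ge 2k+1$ we would already produce a red $C_{2k+1}$, contradicting the hypothesis on $G^R$. Hence $|A|,|B| \le 2k$, and combining this with $|A|+|B|=|H| \ge 3j-250$ and the bound $k \le j-251$ from $(JK)$ gives $|A|,|B| \ge (3j-250)-2k \ge j+252$.

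In the first case, suppose there exist two vertex-disjoint red cross-edges $a_1b_1$ and $a_2b_2$ with $a_1,a_2 \in A$ and $b_1,b_2 \in B$. Because $A$ is a red clique, it contains a red path from $a_1$ to $a_2$ of every length $\ell_A \in [1,|A|-1]$ (just choose $\ell_A-1$ intermediate vertices in any order), and symmetrically $B$ contains a red path from $b_2$ to $b_1$ of every length $\ell_B \in [1,|B|-1]$. Concatenating these two paths with the two cross-edges yields a red cycle of length $\ell_A+\ell_B+2$, so every length in the interval $[4,\,|A|+|B|]$ is realized. Since $|A|+|B| \ge 3j-250 > 2k+1$ by $(JK)$, we can select $(\ell_A,\ell_B)$ with $\ell_A+\ell_B=2k-1$ satisfying both upper bounds, producing a red $C_{2k+1}$ and thus a contradiction.

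In the remaining case, no two red cross-edges are vertex-disjoint, so the bipartite graph of red cross-edges has matching number at most $1$. Then either there are no red cross-edges at all, or by König's theorem a single vertex $w$ is incident to every red cross-edge. In either situation, after removing at most the one vertex $w$ every edge of $H$ between $A\setminus\{w\}$ and $B\setminus\{w\}$ is blue, producing a blue complete bipartite subgraph with both parts of size at least $j+251 > j$. Such a graph contains a blue $C_{2j}$, and since every vertex of $H$ is a blue neighbor of $v$, adjoining $v$ as a hub yields a blue $W_{2j}$ in $G$, the desired contradiction.

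The main obstacle is making Case~1 precise: one must check that integers $\ell_A,\ell_B$ with $\ell_A+\ell_B=2k-1$, $1 \le \ell_A \le |A|-1$, and $1 \le \ell_B \le |B|-1$ actually exist. The lower bounds $|A|,|B| \ge j+252$ and the totality $|A|+|B| \ge 2k+1$ both coming from $(JK)$ leave ample room for such a choice (take for instance $\ell_A=\max(1,\,2k-|B|)$, $\ell_B=2k-1-\ell_A$), and this is precisely the step where the numerical hypotheses in $(JK)$ are genuinely invoked.
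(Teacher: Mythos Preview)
Your proof is correct and follows essentially the same approach as the paper's: bound each part of the bipartition between $j+252$ and $2k$, then split into the case of two disjoint red cross-edges (build a red $C_{2k+1}$ from red paths in each clique) versus the case where a single vertex meets all red cross-edges (obtain a blue $K_{j+251,j+251}$ and hence a blue $C_{2j}$, completing a $W_{2j}$ with $v$). Your version is slightly more explicit in verifying the path-length arithmetic in the first case and in invoking K\"onig for the second, but the argument is the same.
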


\begin{proof}

Assume for the sake of a contradiction that $H^B$ is bipartite.  If $H_1$ and $H_2$ are the two parts of the partition, both have size at most $2k$ since otherwise there would be a red copy of $C_{2k+1}$ in $H$.  Hence, both parts have size at least $|H|-2k \ge (3j-250)-(2j-502)=j+252$, using $(1)$.

If there were two disjoint red edges between $H_1$ and $H_2$ then using the fact that the edges among $H_1$ and $H_2$ were all red, $H$ would have all cycle lengths between $4$ and $|H|>2k+1$, so $H$ would have a red $C_{2k+1}$, which is a contradiction.  Thus there is a vertex $w \in V(H)$ such that $(H-w)^{B}$ is a complete bipartite graph with both parts of size at least $j+251$, and then $H^B$ contains $K_{j,j}$.  It follows that $H^B$ contains $C_{2j}$, and so with $v$ this forms a copy of $W_{2j}$ inside $G^B$, again a contradiction. \end{proof}

The next lemma is comparable to Lemma 32 of \cite{san}.  The proof is more complicated than in the case solved by Sanhueza-Matamala, because we assume that $|G|=4j+1$.

\begin{lemma} $H^R$ is not bipartite. \end{lemma}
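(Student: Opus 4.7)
The plan is to argue by contradiction. Suppose $H^R$ is bipartite with parts $H_1, H_2$, WLOG $|H_1| \ge |H_2|$; the goal is to produce either a red $C_{2k+1}$ or a blue $W_{2j}$ in $G$. Each $H_i$ is automatically a blue clique, since non-edges of $H^R$ within a part are blue, and any $H_i$ of size at least $2j$ would contain a blue $C_{2j}$ giving a blue $W_{2j}$ with hub $v$. Thus $|H_i| \le 2j - 1$, and combined with $|H| \ge 3j - 250$ this forces $|H_1|, |H_2| \in [j - 249, 2j - 1]$ and $|H_1| \ge (3j-250)/2 \ge j + 2$.

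The first step is to show that all blue edges between $H_1$ and $H_2$ share a common vertex. If two disjoint blue crossings $x_1 y_1, x_2 y_2$ existed (with $x_i \in H_1$, $y_i \in H_2$), one could pick a blue path on $a$ vertices from $x_1$ to $x_2$ inside the blue clique $H_1$ and a blue path on $2j - a$ vertices from $y_2$ to $y_1$ inside $H_2$, for any integer $a$ in the nonempty interval $[\max(2, 2j - |H_2|), \min(|H_1|, 2j - 2)]$. Concatenating yields a blue $C_{2j} \subseteq H$, with $v$ as hub---a contradiction. Hence WLOG a single $w \in H_1$ is incident to every blue crossing, and the edges between $H_1 \setminus \{w\}$ and $H_2$ form a red complete bipartite graph $K_{|H_1|-1, |H_2|}$.

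Next, set $W = V(G) \setminus (H \cup \{v\})$, so $|W| \le j + 250$ and $uv \in E^R$ for every $u \in W$. If some $u \in W$ has red edges to vertices $x \in H_1 \setminus \{w\}$ and $y \in H_2$, then by $(JK)$ we have $\min(|H_1|-1, |H_2|) \ge j - 249 \ge k + 2$, so the red $K_{|H_1|-1, |H_2|}$ contains a red $(x,y)$-path of length $2k - 1$; closing through $u$ gives a red $C_{2k+1}$, a contradiction. Therefore each $u \in W$ is blue-complete to $H_1 \setminus \{w\}$ or to $H_2$; call these sets $W_A, W_B$, so that $W = W_A \cup W_B$.

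The final step, which is the main obstacle, is to extract a contradiction from these structural constraints. The key observation is that every vertex of $H_1 \setminus \{w\}$ is blue to every vertex of $W_A$, and every vertex of $H_2$ is blue to every vertex of $W_B$. When $|W_A| \ge 2j - |H_1| + 2$, one can build a blue $C_{2j}$ using $a$ vertices of $H_1 \setminus \{w\}$ and $b = 2j - a$ vertices of $W_A$ (with the $W_A$ vertices spaced out by $H_1$ vertices so that no two are consecutive), leaving at least one vertex of $H_1 \setminus \{w\}$ unused; that spare vertex is blue to every $H_1$ cycle vertex via the clique structure and to every $W_A$ cycle vertex by definition of $W_A$, so it serves as a hub, yielding a blue $W_{2j}$ and a contradiction. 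A symmetric argument handles the case $|W_B| \ge 2j - |H_2| + 2$ when $|H_2| \ge j + 2$. The difficulty is in the boundary cases: when $|W_A|$ and $|W_B|$ are both too small for these constructions, or when $|H_2|$ is close to $j - 249$ (so the symmetric $H_2$-side construction is infeasible regardless of $|W_B|$). There one must invoke the identity $|W| = 4j - |H|$, the blue-degree bound $\deg^B(w) \le |H|$ (which caps $w$'s blue neighbors in $H_2 \cup W$ at $|H_2|$), and the almost-complete red bipartite structure, to force either a red $C_{2k+1}$ (via a carefully chosen red path, possibly involving $w$ or two vertices of $W$) or a blue $C_{2j}$ with hub $w$ or a vertex from $W_A \cup W_B$. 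This tight bookkeeping is precisely what makes the proof more delicate than the analogous lemma of Sanhueza-Matamala, where the larger $|H|$ provided enough slack to avoid such boundary cases altogether.
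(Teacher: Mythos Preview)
Your setup through the red $K_{|H_1|-1,|H_2|}$ is correct and parallels the paper. The gap is in your treatment of $W=N^R(v)$. You prove only that each individual $u\in W$ is blue-complete to one side, giving a split $W=W_A\cup W_B$. The paper proves the strictly stronger fact that \emph{one entire side} (say $P_1$) is blue-complete to \emph{all} of $W$. The ingredient you miss is $v$ itself: if $p_1z_1$ and $p_2z_2$ are red with $p_i$ in opposite parts and $z_i\in W$, then $p_1z_1vz_2p_2$ is a red path of length $4$ (since $vz_i$ is red by definition of $W$), which completes to a red $C_{2k+1}$ through the bipartite structure. Without this, your ``boundary cases'' are not bookkeeping but genuine obstructions: the constraints $|W_A|\le 2j-|H_1|+1$ and $|W_B|\le 2j-|H_2|+1$ together only yield $|W_A|+|W_B|\le |W|+2$, which is compatible with $W=W_A\cup W_B$, so nothing collapses. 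Your closing paragraph names tools (``$\deg^B(w)\le|H|$'', ``a carefully chosen red path'') but does not use them; in particular, when $|H_2|$ is near $j-249$ one gets $|H_1|\ge 2j-251$, hence $|W_A|\le 252$, and then $|H_2|+|W_B|$ can fall short of $2j+1$, so neither of your two wheel constructions fires.

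With the paper's stronger conclusion the endgame is concrete. If $|P_1|\ge j+1$, take a hub $p\in P_1$ and build a blue $C_{2j}$ inside $(P_1\setminus\{p\})\cup\{v\}\cup W$ (using that $v$ is blue to all of $P_1$); counting gives $|P_1|+|W|+1\ge (4j+1)-|P_2|-1\ge 2j+1$. If $|P_1|\le j$, then $|W|\ge j$; now examine a maximal blue matching in $G[W]$. If it has at least $502$ edges, thread those edges together with vertices of $P_1$ (and $v$) to get a blue $C_{2j}$ with hub in $P_1$. Otherwise a red clique $W'\subseteq W$ of size $\ge j-1002$ remains; two red edges from $W'$ to distinct vertices of $P_2$ close up to a red $C_{2k+1}$, and failing that some $W'\setminus\{a\}$ is blue-complete to $P_2$ (with $|P_2|\ge 2j-251$), giving a blue $W_{2j}$ with hub in $P_2$.
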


\begin{proof} 

Assume $H^R$ is bipartite with parts $H_1$ and $H_2$.  If $H^B$ contains $C_{2j}$, then $G^B$ contains $W_{2j}$, so we must have that $|H_i| \le 2j-1$.  So using $(1)$, we have that $|H_i| \ge |H|-|H_{3-i}| \ge (3j-250)-(2j-1)=j-249$ as well.

There cannot be two disjoint blue edges between $H_1$ and $H_2$, because then $H^B$ would contain blue cycles of all lengths between $4$ and $|H| \ge 3j-250>2j$, again using $(1)$.

Thus, we can find a vertex $x \in V(H)$ so that $(H-x)^{R}$ is a complete bipartite graph.  Both parts $P_1$ and $P_2$ of $(H-x)^R$ are of size at least $j-250 \ge k+1$, since $P_i=H_i-x$ if $x \in H_i$ and $P_i=H_i$ otherwise for $i=1,2$.

Recalling that $H=N^B(v)$, let $Z=N^R(v)$.  We will now prove that at least one of the $P_i$ has only blue edges to $Z$.  Say there are red edges from some vertex $p_1$ of $P_1$ to $Z$ and from a vertex $p_2$ of $P_2$ to $Z$.  Then if there is some $z$ such that $zp_1$ and $zp_2$ are both red, because $|P_i| \ge k$ we use $p_1zp_2$ as well as a $K_{k,k}$ subgraph of $(H-x)^R$ containing both $p_1$ and $p_2$ to get a red $C_{2k+1}$.  If there is some $z_1$, $z_2$ so that $z_1p_1$ and $z_2p_2$ are both red then $p_1z_1vz_2p_2$ is a red path of length $4$.  Again this path can be completed to form a red cycle of length $2k+1$, this time using a $K_{k-1,k-1}$ subgraph of $(H-X)^R$ containing $p_1$ and $p_2$. It follows that at least one of the $P_i$ has only blue edges to $Z$ here.  Assume this is $P_1$.  We have \[|P_1| \ge |H_1|-1 \ge j-250\tag{P1}\] and \[|P_2| \le |H_2| \le 2j-1\tag{P2}\] from the bounds on $|H_i|$.  The vertices of $P_1$ have only blue edges with other elements of $P_1$, with $Z$, and with $v$. This motivates using a vertex of $P_1$ as the hub of a blue $W_{2j}$. 

If indeed $|P_1| \ge j+1$, pick some vertex $p$ of $P_1$ to use as a hub, and note that $P_1$, $\{v\}$, $Z$ in total have at least $(4j+1)-|P_2|-1 \ge 2j+1$ vertices, using the fact that $|P_2| \le |H_2| \le 2j-1$.  As such, we can use the at least $2j$ combined vertices of $P_1$, $\{v\}$, and $Z$ (other than $p$) to make a blue $C_{2j}$, since at least $j$ are in $P_1$ and thus have only blue edges between them while the others connect with only blue edges to $P_1 \setminus \{p\}$. Along with $p$, this forms a blue $W_{2j}$, a contradiction. 

Thus we may assume $|P_1| \le j$.  Using $(P2)$, we have that $|H| \le |P_1|+|P_2|+1=3j$.  This implies that $|Z|=|G|-|H|-|\{v\}| \ge j$.  Furthermore, we have that $|Z| \le 4j-|H| \le j+250$ by $(1)$.  Note also that $|P_1|+|P_2|+|\{v\}|+|Z| \ge 4j$, because the sets $P_1$, $P_2$, $\{v\}$, and $Z$ together contain all $4j+1$ vertices of $G$ except for $x$.  Using $(P2)$, we have $|P_1|+|Z|+|\{v\}| \ge 4j-|P_2| \ge 2j+1$.

Recall $(P1)$.  If there are $502$ vertex-disjoint blue edges among the vertices of $Z$, we use a vertex of $P_1$ as a hub, and using the at least $j-251$ remaining vertices in $P_1$, the $502$ edges among $Z$, and the at least $j-753$ left over vertices of $Z$, $P_1$, and $\{v\}$ (since $|Z|+|P_1|+|\{v\}| \ge 2j+1$) we obtain a blue cycle of length $2j$ and thus a copy of $W_{2j}$.

Else, there is a maximal matching $M$ in $Z^B$ with at most $501$ blue edges, and so at most $1002$ vertices of $Z$ are incident to edges of $M$.  Let $Z'$ be the set of vertices of $Z$ which are not incident to edges of $M$.  Hence $Z' \subset Z$ is such that $Z'^R$ is complete and $|Z'| \ge |Z|-1002 \ge j-1002$.  Now, if two vertices of $Z'$ have red edges to different vertices of $P_2$ we use those edges, the edge between the two vertices of $Z'$, and the edges between $P_1$ and $P_2$ to get a red copy of $C_{2k+1}$, and we are finished.  Recall that $2k+1<(j-250)+(j-249)+2=2j-497$ because $j-k>250$.

Otherwise, all edges between $Z'$ and $P_2$ are blue, or there is a red edge between some $a$ in $Z'$ and some $b$ in $P_2$.  In either case we can find $a \in Z'$ so that all edges between $Z' \setminus \{a\}$ and $P_2$ are blue.  We have $|P_2| \ge |H|-|P_1|-1 \ge 2j-251$.  Taking any vertex of $P_2$ as a hub, and using the at least $j-1003 \ge 252$ vertices of $Z' \setminus \{a\}$ as well as the at least $2j-252$ other vertices of $P_2$, we find a copy of $W_{2j}$, finishing the proof. \end{proof}

The next lemma is comparable to Lemma 33 of \cite{san}.

\begin{lemma}

$H^B$ is weakly pancyclic and contains a triangle.

\end{lemma}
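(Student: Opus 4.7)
The plan is to apply Theorem 6 twice, once to rule out $H^R$ being dense and then once positively to $H^B$. Write $n = |H| \ge 3j - 250$. Since both $H^R$ and $H^B$ are known to be non-bipartite (by Lemmas 14 and 13 respectively), Theorem 6 is applicable to whichever of them carries enough edges; the job is to show that it must be $H^B$.

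First I would bound $e(H^R)$ from above. Suppose for contradiction that $e(H^R) > (n-1)^2/4 + 1$. Then by Theorem 6, $H^R$ is weakly pancyclic and contains a triangle, so it contains cycles of every length from $3$ up to $c(H^R)$. But Lemma 12 gives $c(H^R) > 2k+1$, so $H^R$ would contain a red $C_{2k+1}$, contradicting our standing assumption. Hence $e(H^R) \le (n-1)^2/4 + 1$.

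Next I would transfer this to a lower bound on $e(H^B)$ by complementation:
\[
e(H^B) \ge \binom{n}{2} - e(H^R) \ge \frac{n(n-1)}{2} - \frac{(n-1)^2}{4} - 1 = \frac{(n-1)(n+1)}{4} - 1 = \frac{n^2-1}{4} - 1.
\]
A short check shows $\frac{n^2-1}{4} - 1 > \frac{(n-1)^2}{4} + 1$ whenever $n > 5$, which certainly holds for $n \ge 3j - 250$ under hypothesis (JK). So $e(H^B) > (n-1)^2/4 + 1$. Applying Theorem 6 to $H^B$, which is non-bipartite by Lemma 13, yields that $H^B$ is weakly pancyclic and contains a triangle.

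There is no real obstacle here: the only thing to be careful about is ensuring that $c(H^R) > 2k+1$ actually produces the $C_{2k+1}$ when $H^R$ is weakly pancyclic — but the extra clause in Theorem 6 that $H^R$ contains a triangle pins the girth to $3$, so the full range $[3, c(H^R)]$ of cycle lengths is realized, which is what makes the argument go through.
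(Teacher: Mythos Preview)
Your proof is correct and follows essentially the same approach as the paper. The paper phrases it as a pigeonhole: one of $H^R$, $H^B$ must exceed the edge threshold of Theorem~6, and it then rules out $H^R$ via Lemma~12 exactly as you do; your version simply makes the edge-count complementation explicit.
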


\begin{proof}

By Lemmas 13 and 14, neither $H^R$ nor $H^B$ is bipartite, and one of them contains more than $(|H|-1)^2/4+1$ edges.  Hence, by Theorem $6$, one of $H^R$ and $H^B$ is weakly pancyclic and contains a triangle.  Since we have $c(H^R)>2k+1$, this cannot be $H^R$.

Hence, $H^B$ is weakly pancyclic and contains a triangle. \end{proof}

We prove another lemma about $H^B$, comparable to Lemma 34 of \cite{san}.

\begin{lemma}

$\delta^B(H) \ge j-250$.

\end{lemma}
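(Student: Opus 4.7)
The plan is to argue by contradiction. Suppose some vertex $u \in V(H)$ has $d_H^B(u) \le j-251$, and let $A = N^R(u) \cap V(H)$ denote its red neighborhood inside $H$. Using $(1)$, I would first note $|A| \ge |H| - 1 - (j-251) \ge 2j$. The core observation is then that $H^R[A]$ cannot contain any red path on $2k$ vertices, because such a path $a_1 a_2 \ldots a_{2k}$ would close with $u$ (via the red edges $u a_1$ and $u a_{2k}$) into the red cycle $u\, a_1\, a_2 \ldots a_{2k}\, u$ of length $2k+1$, contradicting the standing hypothesis that $G^R$ has no $C_{2k+1}$. In particular $c(H^R[A]) \le 2k-1$, and the Erd\H{o}s--Gallai bound for graphs without long paths yields $|E(H^R[A])| \le (k-1)|A|$. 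Hence $|E(H^B[A])| \ge \binom{|A|}{2} - (k-1)|A|$, which is very close to $\binom{|A|}{2}$ since $|A| \ge 2j \ge 2k+502$ under $(JK)$.

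Next I would translate this density of blue edges into a blue $C_{2j}$ inside $H$. The natural hub for the forbidden blue $W_{2j}$ is $v$ itself, which is blue-adjacent to every vertex of $H \supseteq A$, so it suffices to produce a blue $C_{2j}$ somewhere in $H$. If $c(H^B) \ge 2j$ we are immediately done, so we may assume $c(H^B) \le 2j-1$; then I would combine the density bound above with the weak pancyclicity of $H^B$ from Lemma $15$ and the classical pancyclicity theorems of the previous section (Bondy's Theorem $7$, Brandt's Theorem $6$, and Lemma $4$) to push $c(H^B)$ up to at least $2j$.

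The hard part will be precisely this final density-to-cycle step, because $|A| \approx 2j$ is just barely too small for a direct Dirac-type argument on $H^B[A]$: Bondy's theorem would demand minimum blue degree at least $|A|/2 \approx j$ inside $A$, which is slightly stronger than what the single-vertex hypothesis immediately supplies. I expect to handle this by a case split based on the distribution of red degrees inside $A$. If many vertices of $A$ have large red degree to $A$, I iterate the Step-1 argument with a second low-blue-degree vertex $u' \in A$ in the role of $u$, extracting a richer common red structure involving both $u$ and $u'$ that ultimately forces a red $C_{2k+1}$. Otherwise most of $A$ has large blue degree to $A$, and I would combine a long blue path inside $A$ with $v$ together with a carefully chosen connector in $V(H) \setminus A$ or in $Z = N^R(v)$ to close a blue $C_{2j}$ in $H$. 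Either branch delivers the desired contradiction, finishing the lemma.
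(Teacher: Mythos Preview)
Your opening is sound: if $u\in V(H)$ had blue $H$-degree at most $j-251$, then $A=N^R(u)\cap V(H)$ has $|A|\ge 2j$, and indeed no red $P_{2k}$ can sit inside $A$ (else it closes through $u$ to a red $C_{2k+1}$), so Erd\H{o}s--Gallai gives $|E(H^R[A])|\le (k-1)|A|$. The gap is in the next sentence. You assert that $\binom{|A|}{2}-(k-1)|A|$ is ``very close to $\binom{|A|}{2}$''; it is not. The deficit $(k-1)|A|$ is a $\tfrac{2(k-1)}{|A|-1}$ fraction of $\binom{|A|}{2}$, and since the standing hypotheses allow $j$ as small as $k+251$ (so $|A|$ can be essentially $2k$), this fraction can be as large as roughly $2/3$. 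In that regime the average blue degree inside $A$ is only about $|A|-2k+1\approx 503$, nowhere near the $|A|/2\approx j$ you would need for Bondy, nor the $|A|/4+250$ threshold of Lemma~4, nor the $(|A|-1)^2/4$ edge count of Theorem~6. So none of the pancyclicity tools you list applies to $H^B[A]$, and the weak pancyclicity of $H^B$ (Lemma~15) by itself gives you nothing without a lower bound on $c(H^B)$. Your fallback ``case split on red-degree distribution'' is only a wish: iterating with a second low-blue-degree vertex $u'$ produces the same weak bound for $A'$ and no obvious mechanism to force a red $C_{2k+1}$, and the ``connector'' branch is not an argument at all.

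The paper's proof avoids all of this with a one-paragraph shift trick. Since $R(C_{2j-500},C_{2j})=3j-251\le |H-w|$ and $(H-w)^B$ has no $C_{2j}$, there is a red cycle $C'$ of length $2j-500>2k-1$ in $H-w$. If $w$ had red edges to two vertices of $C'$ at cyclic distance $2k-1$, those two chords plus the arc of $C'$ would give a red $C_{2k+1}$. Hence the shift $x\mapsto x+(2k-1)$ on $V(C')$ sends every red neighbour of $w$ to a blue neighbour of $w$, which forces at least half of the $2j-500$ edges from $w$ to $V(C')$ to be blue, i.e.\ $d^B_H(w)\ge j-250$. The point is that working against a single explicit long red cycle, rather than a raw edge count on $A$, turns the problem into a pigeonhole on a $\mathbb{Z}/(2j-500)\mathbb{Z}$ shift and sidesteps the density-to-cycle conversion that your outline leaves open.
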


\begin{proof}

Pick a vertex $w \in V(H)$.  Because $R(C_{2j-500},C_{2j})=3j-251$ by Theorem $8$, and $(H-w)^{B}$ does not have a copy of $C_{2j}$, we have that $(H-w)^{R}$ has a copy of $C_{2j-500}$.  Pick such a copy and call it $C'$.  Note $2j-500>2k>2k-1$.

Note that w cannot have red edges to two vertices in $V(C')$ at distance $2k-1$ along $C$, else there would be a red $C_{2k+1}$ in $G$. Hence, there is at least one blue edge between $w$ and $V(C')$ for each red edge between $w$ and $V(C')$. Therefore, at least half of the edges between $w$ and $V(C')$ are blue.  Hence the blue degree of $w$ in $H$ is at least $j-250$, as desired. \end{proof}

The next lemma about $H^B$ is comparable to a result proved in the conclusion of \cite{san}.

\begin{lemma}

$H^B$ is $2$-connected.

\end{lemma}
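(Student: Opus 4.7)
The plan is to suppose, toward contradiction, that $H^B$ is not $2$-connected and produce a red $C_{2k+1}$ in $G$. Take $S \subseteq V(H)$ with $|S| \leq 1$ such that $(H - S)^B$ is disconnected. By Lemma 16 we have $\delta^B(H) \geq j - 250$, and any vertex of a component $C$ of $(H - S)^B$ has all of its blue $H$-neighbors inside $C \cup S$, so $|C| \geq (j - 250) - |S| + 1 \geq j - 250 \geq k + 1$ using $(JK)$. The crucial observation is that every edge of $H - S$ joining two distinct components of $(H - S)^B$ must be red, since otherwise it would merge them.

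First I would handle the case of at least three components. Fix three of them $C_1, C_2, C_3$. Since $|C_1|, |C_2| \geq k$ and $|C_3| \geq 1$, the cycle
\[ c_1^{(1)}\, c_1^{(2)}\, c_2^{(1)}\, c_2^{(2)}\, \cdots\, c_k^{(1)}\, c_k^{(2)}\, c^{(3)}\, c_1^{(1)} \]
with distinct $c_i^{(t)} \in C_t$ and $c^{(3)} \in C_3$ uses $2k+1$ vertices and has each edge running between two distinct components of $(H-S)^B$, hence forms a red $C_{2k+1}$ in $H$.

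Now suppose there are exactly two components $C_1, C_2$. I first claim neither contains a red edge. If $uw$ were a red edge in $C_1$, then choosing distinct $x_1, \ldots, x_k \in C_2$ and distinct $u_1, \ldots, u_{k-1} \in C_1 \setminus \{u, w\}$ (possible since $|C_1| \geq k+1$ and $|C_2| \geq k$), the cycle $u-w-x_1-u_1-x_2-u_2-\cdots-x_{k-1}-u_{k-1}-x_k-u$ is a red $C_{2k+1}$; the same argument works for $C_2$. Thus $C_1$ and $C_2$ induce blue cliques. In the subcase $|S| = 0$, the red subgraph $H^R$ equals $K_{|C_1|, |C_2|}$, which is bipartite, contradicting Lemma 14.

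The hard part is the remaining subcase $|S| = 1$, say $S = \{s\}$, because here the extra vertex $s$ prevents $H^R$ from being bipartite while still leaving $C_1, C_2$ red-free internally. The red edges of $H$ are exactly those between $C_1$ and $C_2$ together with $s$'s red incidences; for $H^R$ to be non-bipartite (Lemma 14), $s$ must have a red neighbor in each of $C_1$ and $C_2$, call them $r_1$ and $r_2$. Choosing distinct $a_1, \ldots, a_{k-1} \in C_2 \setminus \{r_2\}$ and distinct $b_1, \ldots, b_{k-1} \in C_1 \setminus \{r_1\}$, the cycle
\[ s - r_1 - a_1 - b_1 - a_2 - b_2 - \cdots - a_{k-1} - b_{k-1} - r_2 - s \]
has exactly $2k+1$ vertices and every edge is either $sr_1$, $sr_2$, or crosses between $C_1$ and $C_2$, hence is red. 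This red $C_{2k+1}$ in $H \subseteq G$ is the desired contradiction, completing the proof.
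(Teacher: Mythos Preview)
Your proof is correct and follows essentially the same approach as the paper: assume a cut set $S$ of size at most $1$, use Lemma~16 to force each component of $(H-S)^B$ to have at least $k+1$ vertices, find a red $C_{2k+1}$ directly if there are three or more components, show the two components are blue cliques (else a red edge plus the cross $K_{k+1,k+1}$ yields a red $C_{2k+1}$), invoke Lemma~14 to get red edges from $s$ into both sides, and close up a red $(2k+1)$-cycle through $s$. The only difference is cosmetic---you spell out the explicit cycles where the paper is terser.
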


\begin{proof}

Assume the contrary, and choose a subset $S \subset V(H)$ of at most $1$ vertex so that $H^B-S$ is disconnected.  Since $H^B-S$ has all degrees of size at least $j-251$ by Lemma $16$, all components are of size at least $j-250 \ge k+1$.  Because edges between different components are red, if there are at least $3$ components we can find a red copy of $C_{2k+1}$.  Hence, $H-S$ has two connected components $C_1$ and $C_2$ of sizes at least $k+1$.  

If $C_1$ or $C_2$ contains any red edge, then the graph contains $C_{2k+1}$, since $K_{k+1,k+1}$ contains $C_{2k+1}$ after adding any edge.  Thus, $C_1$ and $C_2$ are complete blue graphs.  Since $H^R$ is not bipartite by Lemma $14$, $S$ is nonempty and $H^B-S \neq H^B$.  As such, there exists $s \in V(H)$ such that $S=\{s\}$ which is in neither $C_1$ nor $C_2$.  Because $(C_1 \cup \{s\}, C_2)$ is not a bipartition of $H^R$, $s$ has a red edge to some vertex $v_1$ of $C_1$.  Similarly, because $(C_1, C_2 \cup \{s\})$ is not a bipartition of $H^R$, $s$ has a red edge to a vertex $v_2$ of $C_2$.  Using the edges $v_1sv_2$ and the fact that both $C_1$ and $C_2$ have at least $k+1$ elements, again we have a red copy of $C_{2k+1}$ in $H$, a contradiction. \end{proof}

The final lemma in this subsection is a quick result on the circumference of $H^B$.

\begin{lemma} $c(H^B)<2j$. \end{lemma}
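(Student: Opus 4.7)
The plan is to argue by contradiction using the weak pancyclicity of $H^B$ established in Lemma 15. The idea is that a sufficiently long blue cycle in $H$ can be cut down to a blue cycle of length exactly $2j$, which together with the hub $v$ produces the forbidden blue $W_{2j}$.

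More precisely, I would suppose for contradiction that $c(H^B) \ge 2j$. By Lemma 15, $H^B$ is weakly pancyclic and contains a triangle, so $H^B$ contains a cycle of every length $\ell$ with $3 \le \ell \le c(H^B)$. Since $2j \ge 3$ and $2j \le c(H^B)$ by assumption, in particular $H^B$ contains a cycle $C$ of length exactly $2j$.

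Now recall that $H = N^B(v)$, so every vertex of $H$ (and in particular every vertex of $C$) is joined to $v$ by a blue edge of $G$. Therefore $\{v\} \cup V(C)$ together with the edges of $C$ and the blue spokes from $v$ forms a blue copy of $W_{2j}$ in $G^B$, contradicting the standing assumption that $G^B$ contains no blue $W_{2j}$. Hence $c(H^B) < 2j$, as claimed.

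There is no real obstacle here: the lemma is essentially a direct consequence of weak pancyclicity of $H^B$ (Lemma 15) plus the defining property of $H$ as the blue neighborhood of $v$. The only thing to verify is that the hypotheses of Lemma 15 were genuinely used to guarantee the existence of every intermediate cycle length up to the circumference, which is exactly what weak pancyclicity provides.
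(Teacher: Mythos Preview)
Your proof is correct and follows essentially the same approach as the paper: assume $c(H^B)\ge 2j$, invoke the weak pancyclicity of $H^B$ from Lemma~15 to obtain a blue $C_{2j}$ inside $H$, and then use the fact that $H=N^B(v)$ to produce a blue $W_{2j}$ in $G$, yielding a contradiction.
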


\begin{proof} Assume the contrary, so $c(H^B) \ge 2j$.  By Lemma 15, $H^B$ is weakly pancyclic and contains a triangle.  Thus $H^B$ has a cycle of length exactly $2j$.  This cycle and $v$ together form a blue $C_{2j}$, a contradiction.  \end{proof}

\subsection{The Structure of a Large Cycle in the Graph}

In this subsection, given a graph $G$ with $4j+1$ vertices, we again we take a vertex $v$ of maximum blue degree, and let $H$ be the graph induced by $N^B(v)$.  The strategy is to use Theorem $5$ to find a large red cycle in $H$, and to use this cycle to develop more structure in $H$.

First, by Lemma 17, $H^B$ is $2$-connected.  By Lemma 18, $c(H^B)<2j$.  Since $|H| \ge 2j$, $c(H^B)<\min(2j,|H|)$.  By Theorem 5 applied to $H^B$, there are some two vertices of $H$, $a$ and $b$, both with blue degree less than $j$, such that the edge between them is red.

Recall that \[|H-a-b| \ge |H|-2 \ge 3j-252 \tag{HAB}\] by $(1)$.  By Theorem $8$, $R(C_{2j-502},C_{2j})=3j-252$.  Since $H^B$ contains no $C_{2j}$, it follows that $H-a-b$ contains a red $C_{2j-502}$, which we call $C$, so that \[|C|=2j-502\tag{C}.\]  Note $2j-502 \ge 2k$ by $(JK)$.  Label the vertices in $V(C)$ by calling some vertex $0$, the vertex immediately counterclockwise of it $1$, and so on.  Thus every vertex in $V(C)$ has a unique label between $0$ and $2j-503$ inclusive.  Taking labels modulo $|C|=2j-502$, for all $i$ the vertices $i$ and $i+1$ are incident in $C$.  Using $(HAB)$ and $(C)$, there are at least $|H-a-b|-|C| \ge 3j-252-(2j-502)=j+250$ vertices of $H$ that are not in $C$, and are not $a$ or $b$.  

If $a$ has a red edge to the vertex $x$ as well as to the vertex $x+(2k-1)$, we have a red copy of $C_{2k+1}$ using those edges from $a$ to $x$ and $x+(2k-1)$, and the red path of length $2k-1$ between $x$ and $x+(2k-1)$.  Hence, there is at least one blue edge between $a$ and $C$ for each red edge between $a$ and $C$.  It follows that at least half the edges from $a$ to $V(C)$ must be blue, so $a$ has at least $j-251$ blue edges to $V(C)$ and at most $j-1$ blue edges in $H$ in total.  In particular, of the $2j-502$ edges from $a$ to $C$, between $j-251$ and $j-1$ of them are blue whereas between $j-501$ and $j-251$ of them are red.  Analogous results hold for $b$.  

Note that $a$ and $b$ each have at least $j-251$ blue edges to $C$, but have a blue degree at most $j-1$ in $H^B$.  Therefore each of $a$ and $b$ has at most $250$ edges to the rest of $H$, which has cardinality $|H-a-b|-|C| \ge 3j-252-(2j-502)=j+250$ by $(HAB)$ and $(C)$.  Thus there is a subset $H'$ of $V(H)$ of size \[ |H'| \ge j+250-250-250=j-250\tag{2}\] such that all edges from $a$ and $b$ to $H'$ are red, $H'$ is disjoint from $C$, and $H'$ does not contain $a$ or $b$.  Fix such a choice of $H'$ and let $c$ be a vertex of $H'$.

We let $A_i$ be $1$ if the edge between $a$ and $i \in V(C)$ is red, and $0$ if it is blue.  Similarly, we let $B_i$ be $1$ if the edge between $b$ and $i \in V(C)$ is red, and $0$ if it is blue.  Next, we prove a series of lemmas about the edges between the vertices $a$ and $b$ on one hand, and the cycle $C$ on the other.  Developing this structure will be crucial for finishing the proof.  We will develop this structure through a series of lemmas, which will all be more or less of the same form.

\begin{lemma} Let $S_{a,19}=\{x: A_x=A_{x+2k-1}\}$ and let $S_{b,19}=\{x: B_x=B_{x+2k-1}\}$.   Then $|S_{a,19}| \le 500$, and similarly $|S_{b,19}| \le 500$.  \end{lemma}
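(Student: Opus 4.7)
The plan is to show that every $x \in S_{a,19}$ must force $A_x = A_{x+2k-1} = 0$ (both edges blue, not both red), and then convert the lemma into a straightforward double count over the indices of $C$.

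First I would observe that no $x$ can have $A_x = A_{x+2k-1} = 1$: if both of the edges $ax$ and $a(x+2k-1)$ were red, then together with the red sub-path of $C$ of length $2k-1$ from $x$ to $x+2k-1$ they would form a red $C_{2k+1}$, contradicting our standing assumption on $G^R$. Hence every $x \in S_{a,19}$ satisfies $A_x = A_{x+2k-1} = 0$, i.e., both of the edges in question are blue.

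Second, I would count. Let $r_a$ denote the number of red edges from $a$ to $V(C)$; the analysis preceding the lemma already guarantees $r_a \ge j - 501$. Classify each of the $|C| = 2j - 502$ indices $x$ by the pattern $(A_x, A_{x+2k-1}) \in \{0,1\}^2$. The pattern $(1,1)$ does not occur, by the previous step. The pattern $(1,0)$ occurs exactly $r_a$ times (one per index $x$ with $A_x = 1$), and applying the shift $x \mapsto x - (2k-1) \pmod{|C|}$ shows that the pattern $(0,1)$ also occurs $r_a$ times. Hence the $(0,0)$ pattern, which is exactly $S_{a,19}$, occurs
\[
|S_{a,19}| \;=\; |C| - 2 r_a \;\le\; (2j-502) - 2(j-501) \;=\; 500
\]
times. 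The identical argument with $b$ and $B$ in place of $a$ and $A$ yields $|S_{b,19}| \le 500$.

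There is no real obstacle: the only non-trivial input is the already-established lower bound $r_a, r_b \ge j - 501$ on the number of red $C$-edges at $a$ and $b$. The essence of the lemma is the pigeonhole observation that a red edge from $a$ to $x$ on $C$ forbids a red edge from $a$ to $x + 2k - 1$, so the two red-edge sets are disjoint along $C$ and must together fit inside $|C|$.
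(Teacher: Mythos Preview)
Your proof is correct and is essentially the same argument as the paper's: both rule out the $(1,1)$ pattern via the red $C_{2k+1}$ obstruction and then use a bijective/shift count to bound the number of $(0,0)$ indices by (blue count) $-$ (red count) $\le (j-1)-(j-501)=500$, which is exactly your $|C|-2r_a$ rewritten. The only cosmetic difference is that you package the count by patterns $(A_x,A_{x+2k-1})$, while the paper phrases it through the bijection $M:x\mapsto x+2k-1$ and the preimage of the blue vertices.
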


\begin{proof} 

It suffices to prove the first inequality, because the second inequality is analogous.

If the edges from $a$ to $x$ and $x+(2k-1)$ are both red, then they form a red cycle of length $2k+1$ along with a path of length $2k-1$ on $C$.  Let $M$ be the map sending the vertex $x$ in $V(C)$ to the vertex $x+(2k-1)$ in $V(C)$.  Any of the at least $j-501$ vertices in $V(C)$ with a red edge to $a$ is sent to a vertex with a blue edge to $a$.  Because $M$ is bijective, there are at most $(j-1)-(j-501)=500$ other vertices in $V(C)$ that can be sent to vertices in $V(C)$ with a blue edge to $a$.  Hence, there are at most $500$ vertices $x \in V(C)$ with a blue edge to $a$ such that $x+(2k-1) \in V(C)$ also has a blue edge to $a$, establishing the lemma. \end{proof}

\begin{lemma}  We let $S_{a,20}=\{x: A_x=B_{x+2k-2}\}$ and $S_{b,20}=\{x: B_x=A_{x+2k-2}\}$.  Then $|S_{a,20}| \le 500$, and similarly $|S_{b,20}| \le 500$.  \end{lemma}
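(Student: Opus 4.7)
The plan is to mirror the strategy behind Lemma 19, but to exploit the additional fact that the edge $ab$ is red. The key observation I want to make is that at a single index $x \in V(C)$, if both $A_x = 1$ and $B_{x+2k-2} = 1$, then I can close up a red $C_{2k+1}$: concatenate the red edge $ax$, the red path of length $2k-2$ along $C$ from $x$ to $x+2k-2$, the red edge $b(x+2k-2)$, and the red edge $ba$. This yields a cycle $a, x, x+1, \ldots, x+2k-2, b, a$ of total length $1 + (2k-2) + 1 + 1 = 2k+1$. Since $a, b \notin V(C)$ and $|C| = 2j - 502 \ge 2k$ by (JK), every vertex along this cycle is distinct, so I obtain a red $C_{2k+1}$ in $G$, contradicting our running assumption.

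Hence, for every $x \in V(C)$ one of $A_x = 0$ or $B_{x+2k-2} = 0$ must hold; in particular whenever $x \in S_{a,20}$, the common value of $A_x$ and $B_{x+2k-2}$ is forced to be $0$. To convert this into the claimed numerical bound, I introduce
\[
T = \{x \in V(C) : A_x = 0\}, \qquad U = \{x \in V(C) : B_{x+2k-2} = 0\}.
\]
The blue-degree analysis preceding Lemma 19 gives $|T| \le j - 1$, and since $U$ is a cyclic shift of $\{y \in V(C) : B_y = 0\}$ by $2k-2$ positions, the same bound gives $|U| \le j - 1$. The previous paragraph says $T \cup U = V(C)$, so by inclusion-exclusion
\[
|S_{a,20}| = |T \cap U| = |T| + |U| - |T \cup U| \le 2(j-1) - (2j - 502) = 500.
\]

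The bound $|S_{b,20}| \le 500$ follows by swapping the roles of $a$ and $b$ throughout: the red edge $ab$ is symmetric, so the analogous construction $b, x, x+1, \ldots, x+2k-2, a, b$ forbids $B_x = A_{x+2k-2} = 1$, and the inclusion--exclusion step goes through verbatim. I do not expect any real obstacle; the only thing to double-check is that the $2k+1$ candidate vertices on the cycle really are distinct, which is immediate from $V(C) \cap \{a, b\} = \emptyset$ together with the length bound $2k - 1 \le |C|$.
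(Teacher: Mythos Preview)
Your proof is correct and follows essentially the same approach as the paper: both arguments first observe that $A_x=B_{x+2k-2}=1$ would close a red $C_{2k+1}$ via $a,x,x+1,\ldots,x+2k-2,b,a$, and then both count to get the bound $500$. The only cosmetic difference is that the paper phrases the count via the bijection $x\mapsto x+2k-2$ (noting that the $\ge j-501$ red-to-$a$ vertices are all mapped into the $\le j-1$ blue-to-$b$ vertices, leaving at most $500$ slots for blue-to-$a$ vertices), whereas you package the same arithmetic as inclusion--exclusion on $T$ and $U$; the two computations are equivalent.
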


\begin{proof} 

As in Lemma 19, it suffices to prove the first inequality.

If the edges from $a$ to $x$ and $b$ to $x+(2k-2)$ are both red, then these form a red cycle of length $2k+1$ along with the edge from $a$ to $b$ and the path from $x$ to $x+(2k-2)$ along $C$.  Let $M$ be the map sending the vertex $x$ in $V(C)$ to the vertex $x+(2k-2)$ in $V(C)$.  Each of the at least $j-501$ vertices in $V(C)$ with a red edge to $a$ is sent to a vertex with a blue edge to $b$.  Because $M$ is bijective, there are at most $(j-1)-(j-501)=500$ other vertices in $V(C)$ that can be sent to vertices in $V(C)$ with a blue edge to $a$.  Hence, there are at most $500$ vertices $x \in V(C)$ with a blue edge to $a$ such that $x+(2k-2) \in V(C)$ also has a blue edge to $a$, establishing the lemma. \end{proof}

\begin{lemma} Let $S_{a,21}=\{x: A_x=B_{x+2k-3}\}$ and $S_{b,21}=\{x: B_x=A_{x+2k-3}\}$.  Then  $|S_{a,21}| \le 500$, and similarly $|S_{b,21}| \le 500$.  \end{lemma}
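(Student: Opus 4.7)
The plan is to follow the template of Lemma 20, but the arithmetic is slightly off: combining the edges $a \to x$, the $C$-path $x \to x+1 \to \cdots \to x+(2k-3)$, $(x+2k-3) \to b$, and $b \to a$ would produce a red cycle of length only $2k$, not $2k+1$. To recover the extra vertex, I would route through the auxiliary vertex $c \in H'$ fixed earlier in this subsection. Because all edges from $a$ and $b$ to $H'$ are red, both $ac$ and $bc$ are red, and $c \notin V(C) \cup \{a,b\}$; so replacing the single edge $ba$ by the two-edge path $b \to c \to a$ converts the prospective $C_{2k}$ into a $C_{2k+1}$ on $2k+1$ distinct vertices (with $(2k-3)$ edges on $C$, the four edges $ax$, $(x+2k-3)b$, $bc$, $ca$, totalling $2k+1$).

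More precisely, I would first invoke the symmetry between $a$ and $b$ to reduce to the bound $|S_{a,21}| \le 500$. Then I would show that if some $x \in V(C)$ satisfied $A_x = B_{x+2k-3} = 1$, the construction above would furnish a red $C_{2k+1}$, contradicting the assumption on $G^R$. Thus every $x \in S_{a,21}$ has $A_x = B_{x+2k-3} = 0$, and from here the counting is exactly as in Lemma 20: the map $M : V(C) \to V(C)$ sending $x$ to $x + (2k-3)$ modulo $|C|$ is a bijection, so the at least $j-501$ vertices $x$ with $A_x = 1$ are sent to $j-501$ distinct vertices $M(x)$ with $B_{M(x)} = 0$. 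Since $b$ has at most $j-1$ blue edges to $V(C)$, at most $(j-1)-(j-501) = 500$ vertices with $B$-value $0$ remain as potential images of some $x \in S_{a,21}$, giving the desired bound. The argument for $|S_{b,21}|$ is identical with the roles of $a$ and $b$ swapped; crucially, the same vertex $c$ still supplies red edges to both endpoints.

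The only genuine obstacle is the cycle-length bookkeeping when inserting $c$: one has to verify that the $2k+1$ vertices $a, b, c, x, x+1, \ldots, x+(2k-3)$ are all distinct, which follows immediately from $H' \cap (V(C) \cup \{a,b\}) = \emptyset$ built into the definition of $H'$. This is the first lemma in the series where the set $H'$, rather than just $a$, $b$, and $C$, is actually used, and the same $c$-bridge trick will presumably reappear whenever the length parity forces a two-edge rather than one-edge closure between $a$ and $b$.
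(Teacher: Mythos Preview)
Your proposal is correct and matches the paper's own proof essentially line for line: the paper likewise routes through the fixed vertex $c\in H'$ (using the red edges $ac$ and $cb$) to turn the would-be $C_{2k}$ into a red $C_{2k+1}$, and then applies the identical bijection-counting argument with $M(x)=x+(2k-3)$ to bound $|S_{a,21}|$ by $(j-1)-(j-501)=500$. Your extra remark verifying distinctness of the $2k+1$ vertices via $H'\cap(V(C)\cup\{a,b\})=\emptyset$ is a welcome bit of care that the paper leaves implicit.
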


\begin{proof} 

As before, it suffices to prove the first inequality.

If the edges from $a$ to $x$ and $b$ to $x+(2k-3)$ are both red, then these form a red cycle of length $2k+1$ along with the edge from $a$ to $c$, the edge from $c$ to $b$, and the path from $x$ to $x+(2k-3)$ along $C$.  Let $M$ be the map sending the vertex $x$ in $V(C)$ to the vertex $x+(2k-3)$ in $V(C)$.  Any of the at least $j-501$ vertices in $V(C)$ with a red edge to $a$ is sent to a vertex with a blue edge to $b$.  Because $M$ is bijective, there are at most $(j-1)-(j-501)=500$ other vertices in $V(C)$ that can be sent to vertices in $V(C)$ with a blue edge to $a$.  Hence, there are at most $500$ vertices $x \in V(C)$ with a blue edge to $a$ such that $x+(2k-3) \in V(C)$ also has a blue edge to $a$, establishing the lemma. \end{proof}

\begin{lemma} Let $S_{a,22}=\{x: A_x \neq B_{x-1} \}$ and $S_{b,22}=\{x: B_x \neq A_{x-1} \}$.  Then $|S_{a,22}| \le 1000$, and similarly $|S_{b,22}| \le 1000$. \end{lemma}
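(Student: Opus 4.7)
The plan is to derive Lemma 22 by combining the previous two lemmas through a common intermediate quantity. The key observation is that $A_x$ and $B_{x-1}$ can each be related to $B_{x+2k-2}$, the first via Lemma 20 applied to $a$ and the second via Lemma 19 applied to $b$ after a shift of index, so that most $x$ must satisfy $A_x = B_{x-1}$.

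In more detail, I would first invoke Lemma 20 applied to the vertex $a$, which gives that $A_x \neq B_{x+2k-2}$ for all but at most $500$ values of $x$. Then I would invoke Lemma 19 applied to $b$: $B_y \neq B_{y+2k-1}$ for all but at most $500$ values of $y$; the substitution $y = x-1$ re-expresses this as $B_{x-1} \neq B_{x+2k-2}$ for all but at most $500$ values of $x$. Taking the intersection of the complements of these two exceptional sets, for all but at most $1000$ values of $x$ both $A_x$ and $B_{x-1}$ equal $1 - B_{x+2k-2}$, and hence $A_x = B_{x-1}$. This yields $|S_{a,22}| \le 1000$. The bound on $|S_{b,22}|$ follows by swapping the roles of $a$ and $b$ throughout: apply Lemma 20 to $b$ and Lemma 19 to $a$ to conclude that $B_x$ and $A_{x-1}$ agree for all but at most $1000$ values of $x$.

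I do not expect a significant obstacle: the argument is essentially a two-step chain of $\{0,1\}$-valued equalities modulo small exceptional sets, using no new information about the cycle $C$ or the vertices $a, b, c$ beyond what Lemmas 19 and 20 already provide. The main conceptual clue is that the bound $1000 = 500 + 500$ in the statement foreshadows a union of two exceptional sets of size $500$, and once the intermediate quantity $B_{x+2k-2}$ is identified as the bridge between $A_x$ and $B_{x-1}$, the bookkeeping is immediate.
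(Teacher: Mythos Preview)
Your proposal is correct and matches the paper's proof essentially line for line: the paper also uses $B_{x+2k-2}$ as the bridge, invoking $S_{a,20}$ (Lemma 20 for $a$) to get $A_x = 1 - B_{x+2k-2}$ and $S_{b,19}$ (Lemma 19 for $b$, shifted by one) to get $B_{x-1} = 1 - B_{x+2k-2}$, then unions the two exceptional sets of size at most $500$ each.
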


\begin{proof} Again, it suffices to prove the first inequality.  By Lemma 20, $A_x=1-B_{x+2k-2}$ for all $x$ not in $S_{a,20}$.  By Lemma 19, we have that $B_{x-1}=1-B_{(x-1)+(2k-1)}=1-B_{x+2k-2}$ whenever $x-1$ is not in $S_{b,19}$.  Thus $A_x=1-B_{x+2k-2}=B_{x-1}$ whenever $x$ is not in $S_{a,20}$ and $x-1$ is not in $S_{b,19}$.  Each of these happens for at most $500$ values of $x$.  Thus $A_x=1-B_{x+2k-2}=B_{x-1}$ for all but at most $1000$ values of $x$, completing the proof. \end{proof}

\begin{lemma} Let $S_{a,23}=\{x: A_x \neq A_{x+1}\}$ and $S_{b,23}=\{x: B_x \neq B_{x+1}\}$.  Then $|S_{a,23}| \le 1000$, and similarly $|S_{b,23}| \le 1000$. \end{lemma}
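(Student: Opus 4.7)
The plan is to mirror the proof of Lemma 22 but combine the $b$-versions of Lemmas 20 and 21 rather than one $a$-version and one $b$-version of Lemmas 19 and 20. The key observation is that those two lemmas pin down an $A$-value in terms of a $B$-value at shifts of $2k-2$ and $2k-3$ respectively; since these shifts differ by exactly one, equating them produces the desired unit-shift invariance on the $A$-sequence.

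Concretely, by Lemma 20 (with $a$ and $b$ swapped) we have $|S_{b,20}| \le 500$, so for every $x \notin S_{b,20}$, $B_x \neq A_{x+2k-2}$, i.e.\ $A_{x+2k-2} = 1 - B_x$. Likewise, by Lemma 21 with the roles reversed, $|S_{b,21}| \le 500$, and for every $x \notin S_{b,21}$, $A_{x+2k-3} = 1 - B_x$. Therefore, for every $x$ outside the union $S_{b,20} \cup S_{b,21}$, which has cardinality at most $1000$, we obtain $A_{x+2k-2} = A_{x+2k-3}$. Reindexing via the bijection $y = x + 2k - 3$ on $V(C)$ (a translation modulo $|C| = 2j-502$), this becomes $A_{y+1} = A_y$, so $y \notin S_{a,23}$. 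Hence $|S_{a,23}| \le 1000$, and $|S_{b,23}| \le 1000$ follows by the symmetric argument, using $|S_{a,20}|, |S_{a,21}| \le 500$ to deduce $B_{y+1} = B_y$ outside a set of size at most $1000$.

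There is no genuine obstacle: once one notices that the shifts $2k-2$ and $2k-3$ in Lemmas 20 and 21 differ by exactly one, the deduction is a one-line combination followed by routine bookkeeping. The only subtlety, as already present in the proof of Lemma 22, is keeping straight which of $S_{a,\cdot}$ or $S_{b,\cdot}$ is needed: bounding the $a$-indexed set $S_{a,23}$ requires the $b$-indexed exceptional sets, and vice versa, because the equations supplied by Lemmas 20 and 21 eliminate the $B$-value on the right-hand side only when the two shifts are aligned at the same $B_x$.
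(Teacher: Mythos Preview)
Your proof is correct and follows essentially the same approach as the paper: combine Lemmas 20 and 21, whose shifts $2k-2$ and $2k-3$ differ by one, to force $A_y=A_{y+1}$ outside an exceptional set of size at most $500+500=1000$. The only cosmetic difference is that you use the $b$-indexed sets $S_{b,20},S_{b,21}$ (eliminating a common $B_x$), whereas the paper uses the $a$-indexed sets $S_{a,20},S_{a,21}$ (eliminating a common $B_{x+2k-2}$); consequently your closing remark that bounding $S_{a,23}$ \emph{requires} the $b$-indexed exceptional sets is not quite accurate, but this does not affect the validity of your argument.
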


\begin{proof} As before, it suffices to prove the first inequality.

By Lemma 21, $A_{x+1}=1-B_{x+2k-2}$ whenever $x+1$ is not in $S_{a,21}$, which represents at most $500$ possible choices of $x$.  By Lemma 20, $A_x=1-B_{x+2k-2}$ whenever $x$ is not in $S_{a,20}$, which represents another at most $500$ choices of $x$.  Thus it follows $A_x=1-B_{x+2k-2}=A_{x+1}$ for all but at most $1000$ values of $x$, completing the proof.  \end{proof}

\begin{lemma} Let $S_{24}=\{x: A_x \neq B_x\}$.  Then $|S_{24}| \le 2000$.

\end{lemma}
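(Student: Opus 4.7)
The plan is to chain together Lemmas 22 and 23 by a union bound. Lemma 22 supplies the identity $A_x = B_{x-1}$ for all but at most $1000$ values of $x$, and Lemma 23 (applied to $B$) supplies the identity $B_{x-1} = B_x$ for all but at most $1000$ values of $x$ (namely, those $x$ for which $x-1 \in S_{b,23}$).

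The key step is simply to observe that whenever $x \notin S_{a,22}$ and $x-1 \notin S_{b,23}$, we have
\[ A_x = B_{x-1} = B_x, \]
so such an $x$ does not lie in $S_{24}$. The set of bad $x$ is contained in the union $S_{a,22} \cup (S_{b,23}+1)$, which has size at most $500 \cdot 2 + 500 \cdot 2 = 2000$ by the bounds from Lemmas 22 and 23. Hence $|S_{24}| \le 2000$.

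There is no real obstacle here; this lemma is a short corollary of the two preceding lemmas, obtained by concatenating the two "approximate identities" $A_x \approx B_{x-1}$ and $B_{x-1} \approx B_x$. The entire proof should fit in a few lines.
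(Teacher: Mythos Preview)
Your proof is correct and takes essentially the same approach as the paper: chain two of the approximate identities from Lemmas 22 and 23 and apply a union bound. The only cosmetic difference is that the paper uses the chain $A_x = A_{x+1} = B_x$ (via $S_{a,23}$ and $S_{a,22}$) whereas you use $A_x = B_{x-1} = B_x$ (via $S_{a,22}$ and $S_{b,23}$), both yielding the same $1000+1000=2000$ bound.
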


\begin{proof} 

By Lemma 23, $A_x=A_{x+1}$ for all $x$ not in $S_{a,23}$.  By Lemma 22, $A_{x+1}=B_x$ whenever $x+1$ is not in $S_{a,22}$.  Hence $A_x=A_{x+1}=B_x$ whenever $x$ is not in $S_{a,23}$ and $x+1$ is not in $S_{a,22}$.  Each happens at most $1000$ times, so this equation holds for all but at most $2000$ values of $x$.  This establishes the lemma. \end{proof}

From Lemma 24, it follows that there are at least $(j-251)-2000=j-2251$ vertices in $V(C)$ that connect to both $a$ and $b$ with blue edges.  Let $\mathcal{B}_1$ be the set of such vertices.  Similarly, at least $(j-501)-2000=j-2501$ vertices in $V(C)$ connect to both $a$ and $b$ with a red edge.  Let $\mathcal{R}_1$ be the set of such vertices.  In other words, for all $f \in \mathcal{B}_1$, $A_f=B_f=0$ whereas for all $f \in \mathcal{R}_1$, $A_f=B_f=1$.

Now, note that if $j-k=251$, then by $(C)$ we have $|C|=k+1$.  If $x$ is not in $S_{a,19}$ it follows from Lemma 19 that $A_x=1-A_{x+2k-1}=1-A_{x-1}$, and if $x-1$ is not in $S_{a,23}$ it follows from Lemma 23 that $A_x=A_{x-1}$.  So for all $x$ either $x \in S_{a,19}$ or $x-1 \in S_{a,23}$.  But $x \in S_{a,19}$ for at most $500$ vertices and $x-1 \in S_{a,23}$ is the case for at most $1000$ vertices, so there are at most $1500$ vertices in $V(C)$.  Then $2j-502 \le 1500$, and $j \le 1001$, which contradicts the assumption in Theorem 2.  For the remainder of the proof, we may assume $j-k \ge 252$.

\subsection{Conclusion of the Proof of Theorem 2}

Recall that we have a set $H' \subset V(H)$ with $|H'| \ge j-250$ and such that vertices of $H'$ have only red edges to $a$ and $b$.  Furthermore, recall that we have a large red cycle $C$.  It is these structures that will play a primarily role in the final part of the proof.

Now, say there is a red edge between some vertex $c$ of $H'$ and some $x \in \mathcal{B}_1$.  Then the edge between $a$ and $x+(2k-2)$ is blue, else there would be a red $C_{2k+1}$ formed by the path from $x$ along $C$ to $x+(2k-2)$ to $a$ to $c$ and back to $x$.  Hence $A_{x+2k-2}=0$.  But $B_x=0$ also by the definition of $\mathcal{B}_1$. Hence, $x \in S_{b,20}$.  In particular, vertices of $\mathcal{B}_1$ not in $S_{b,20}$ lack red edges to $H'$.  Thus we can find $\mathcal{B}_2=\mathcal{B}_1 \setminus S_{b,20}$ with $|\mathcal{B}_2| \ge |\mathcal{B}_1|-|S_{b,20}| \ge j-2751$ such that every edge between a vertex of $\mathcal{B}_2$ and a vertex of $H'$ is blue.

The upshot of this is that in $H^B$, there is now a copy of the complete bipartite graph $K_{j-2751,j-250}$ formed by $\mathcal{B}_2$ on the one hand and by $H'$ on the other.  Recall that in order to exhibit a contradiction all we must do is show that $c(H^B) \ge 2j$, because this contradicts Lemma 18.  The idea now will be to find more vertices of $V(C)$ with blue edges to pairs of vertices of $\mathcal{B}_2$, and use these to complete a blue cycle of length at least $2j$ inside $H$. 

There are at least $j-2501$ vertices in $\mathcal{R}_1$ which connect to both $a$ and $b$ with a red edge.  Let $E$ be the set of $x \in V(C)$ so that at least one of the following conditions holds: $x+2k \in S_{b,22}, x+(2k-1) \in S_{a,22}, x-(2k-1) \in S_{a,22}$, $x-2k \in S_{a,23}$, $x-2k \in S_{b,20}$, or $x+2k \in S_{b,20}$.  Thus $|E| \le 4(1000)+2(500)=5000$.  Let $x \in \mathcal{R}_1 \setminus E$. Then $A_{x+(2k-1)}=0$ to prevent a red $C_{2k+1}$, and by Lemma 22, $B_{x+2k}=0$.  By Lemma 23, $A_{x+2k}=0$.  Similarly, $A_{x-(2k-1)}=0$, and by Lemma 22, $B_{x-2k}=0$.  By Lemma 23, $A_{x-2k}=0$.  Thus $x-2k$ and $x+2k$ are both in $\mathcal{B}_1$.  As neither $x+2k$ nor $x-2k$ is in $S_{b,20}$, both $x+2k$ and $x-2k$ are in $\mathcal{B}_2$.

Thus, there are at least $|\mathcal{R}_1|-|E| \ge |\mathcal{R}_1|-5000 \ge j-2501-5000=j-7501$ vertices $x \in V(C)$ such that $x+2k$ and $x-2k$ are both in $\mathcal{B}_2$ but $x \in \mathcal{R}_1$.  Let $\mathcal{R}_2$ be the set of such $x$, so $\mathcal{R}_2 \subset \mathcal{R}_1$ is a subset of $V(C)$.  Recall also that for any $x \in V(C)$ the edge between $x$ and $x-2k$ and the edge between $x$ and $x+2k$ are both blue to prevent the existence of a red $C_{2k+1}$. The vertices $x-2k$, $x$, and $x+2k$ are all distinct because $2k<2j-502<4k$.  

We have $|\mathcal{R}_2| \ge j-7501$.  Now, we will pick a set $L$ of $3001$ vertices of $\mathcal{R}_2$ such that no two are at distance $4k$ in $C$.  We do this through a greedy algorithm.  At any point, if at most $3000$ vertices of $\mathcal{R}_2$ have been picked, those vertices and the vertices of distance $4k$ away represent at most $3 \times 3000$ vertices of $\mathcal{R}_2$.  Thus there is another vertex of $\mathcal{R}_2$ that can be picked, because $j \ge 16502$ and so $|\mathcal{R}_2| \ge j-7501 > 3 \times 3000$.

These $3001$ chosen vertices $L$ of $\mathcal{R}_2$ all connect with blue edges to distinct pairs of vertices of $\mathcal{B}_2$.  The idea here is that we will use them to extend the complete blue bipartite graph between $H'$ and $\mathcal{B}_2$, and ultimately to find a blue $C_{2j}$ in $H$. It turns out that if $H'$ has a blue matching with at least $5502$ edges, we can do this immediately.

\begin{lemma} The maximal matching of $H'^B$ has at most $5501$ edges.

\end{lemma}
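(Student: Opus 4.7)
The plan is to argue by contradiction: assume that $H'^B$ contains a matching $\{h_j h'_j\}_{j=1}^{5502}$ of size $5502$, and construct a blue cycle in $H^B$ of length exactly $2j$, which would contradict Lemma 18.

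Fix an arbitrary ordering $l_1, \ldots, l_{3001}$ of $L$ and write $\alpha_i = l_i - 2k$, $\beta_i = l_i + 2k$. By the distance-$4k$ choice of $L$, the set $\mathcal{B}_L := \{\alpha_i, \beta_i : 1 \le i \le 3001\}$ consists of $6002$ distinct vertices of $\mathcal{B}_2$. Three blue substructures are in hand: the edges $l_i \alpha_i$ and $l_i \beta_i$ (since for any $x \in V(C)$, the edges $x(x \pm 2k)$ must be blue to avoid closing a red arc of $C$ into a red $C_{2k+1}$), the complete blue bipartite graph between $\mathcal{B}_2$ and $H'$, and the $5502$ matching edges. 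I will trace out a cycle of the form
\[
l_1,\beta_1,P_1,\alpha_2,l_2,\beta_2,P_2,\alpha_3,\ldots,l_{3001},\beta_{3001},P_{3001},\alpha_1,l_1,
\]
in which each arc $P_i$ runs from $\beta_i$ to $\alpha_{i+1}$ (indices mod $3001$) by alternating $t_i$ ``$H'$-slots'' with $t_i - 1$ vertices of $\mathcal{B}_2 \setminus \mathcal{B}_L$, where each slot is either a single vertex of $H'$ or a matched pair $h_j h'_j$.

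Writing $T := \sum_i t_i$ and $M := \sum_i m_i$ (with $m_i$ the number of matched slots in $P_i$), a direct count gives cycle length $2T + M + 6002$, using $T + M$ vertices of $H'$ and $T + 3001$ vertices of $\mathcal{B}_2$. Setting $M = 5502$ and $T = j - 5752$ simultaneously saturates $|H'| \ge j - 250$ and $|\mathcal{B}_2| \ge j - 2751$ and gives length $2(j - 5752) + 5502 + 6002 = 2j$. The side conditions $t_i \ge 1$ (so $T \ge 3001$) and $m_i \le t_i$ (so $M \le T$) reduce to $j \ge 8753$ and $j \ge 11254$, both easily satisfied since $j \ge 212299$. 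The threshold $5502$ is tight because it is the smallest $M$ for which both vertex budgets can be saturated simultaneously to push the cycle length up to $2j$; this is precisely why the lemma's constant is $5501$. I expect the main bookkeeping to be verifying pairwise disjointness of $L, \mathcal{B}_L, \mathcal{B}_2 \setminus \mathcal{B}_L, H'$ (which follows from $H' \cap V(C) = \emptyset$, from $L \subseteq \mathcal{R}_1$ being disjoint from $\mathcal{B}_L \subseteq \mathcal{B}_1$, and from the pairs $\{\alpha_i, \beta_i\}$ being mutually disjoint) and that every cycle edge is blue (all four edge types having been justified above). I do not anticipate any serious technical obstacle beyond keeping the counting tight.
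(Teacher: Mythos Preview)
Your proposal is correct and follows essentially the same approach as the paper: both use the $3001$ blue ``detours'' $\alpha_i\,l_i\,\beta_i$ through $L$, the $5502$ matching edges inside $H'$, and the complete blue bipartite graph between $H'$ and $\mathcal{B}_2$, arriving at the identical count $2(j-5752)+5502+6002=2j$. The only cosmetic difference is that the paper phrases the construction via contracting each $a_ib_i$ and each $c_i\ell_id_i$ to single vertices, taking a bipartite $C_{2j-11504}$, and then re-expanding, whereas you trace out the cycle directly; the underlying idea and the arithmetic are the same.
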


\begin{proof} Assume there is a matching of $H'^B$ with at least $5502$ edges.  Then there exists $5502$ disjoint blue edges of $G[H']$, say $a_ib_i$ for $1 \le i \le 5502$.  Label the vertices of $L$ as $\ell_i$ for $1 \le i \le 3001$.  For each $\ell_i$, let $c_i$ be one of the vertices of $V(C)$ at distance $2k$ from it, and let $d_i$ be the other such vertex.  Let $G_1$ be the subgraph of $H^B$ induced on the union of the vertex sets of $L$, $H'$, and $\mathcal{B}_2$.  Let $G_2$ be the graph minor of $G_1$ formed by contracting every edge $a_ib_i$ to a single vertex $x_i$, and contracting every path $c_i\ell_id_i$ to a single vertex $y_i$.  There are at least $j-250-5502=j-5752$ vertices of $H'$ and $x_i$ in $V(G_2)$.  We pick $j-5752$ of these, including all the $x_i$, and call them \emph{left vertices}.  There are also at least $j-2751-3001=j-5752$ vertices of $\mathcal{B}_2$ and $y_i$ in $V(G_2)$.  We pick $j-5752$ of these, including the $y_i$, and call them \emph{right vertices}.  All $x_i$ are formed through contraction of vertices of $H'$, and all $y_i$ through contraction of vertices of $\mathcal{B}_2$ and $L$.  Since there is an edge between every vertex of $H'$ and every vertex of $\mathcal{B}_2$, there is an edge between every left vertex and every right vertex.  Thus there is a cycle of length $(j-5752)+(j-5752)=2j-11504$ consisting of left vertices and right vertices.  Orient this cycle.  Now we will expand this cycle to a cycle of length $2j$ in $G_1$.  In general $f^{-}$ denotes the vertex before $f$ on $C$, and $f^{+}$ is the vertex after.  First, expand the $x_i$ in sequence, starting with $i=1$.  For $1 \le i \le 5502$, on the $i$th step, replace the edge directed toward $x_i$ from the vertex $x_i^{-}$ and the edge from $x_i$ to some $x_i^{+}$ with the edges from $x_i^{-}$ to $a_i$, $a_i$ to $b_i$, and $b_i$ to $x_i^{+}$.  After this, for $1 \le i \le 3001$, on the $5502+i$th step, replace the edges from $y_i^{-}$ to $y_i$ and $y_i$ to $y_i^{+}$ with edges from $y_i^{-}$ to $c_i$, $c_i$ to $\ell_i$, $\ell_i$ to $d_i$, and $d_i$ to $y_i^{+}$.  After all this is done, the result is a cycle on $(j-5752)+(j-5752)+5502 \times 1+3001 \times 2=2j$ vertices which is inside $G_1$.  Thus there is a $C_{2j}$ in $H^B$, and a $W_{2j}$ in $G^B$, a contradiction. \end{proof}

Hence, we may assume that there exists maximal matching $M$ of $H'^B$ with at most $5501$ edges.  Let $J=H' \setminus V(M)$.  We have \[ |J| \ge |H'|-2(5501)=|H'|-11002 \ge j-250-11002=j-11252\tag{3} \] by $(2)$, and $J^R$ is complete by the maximality of the matching.

From here on out, the idea is to find a lot of blue edges between $J$ and $C$, using the fact that $H$ lacks a red $C_{2k+1}$.  Using these blue edges, we will find a $C_{2j}$ in $H$.   In order to find such a $C_{2j}$, we need another lemma.

\begin{lemma} If $\frac{j}{3} \ge T \ge 73$, and some set $L'$ of at least $j-T$ vertices of $H$ all connect with blue edges to some set $R'$ of at least $j+4T-252$ vertices in $V(C)$, $H^B$ has a $C_{2j}$.  \end{lemma}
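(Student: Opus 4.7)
The plan is to build a blue cycle of length $2j$ in $H^B$ by starting with a blue alternating cycle of length $2(j-T)$ in the bipartite blue graph between $L'$ and $R'$ (available since each side has at least $j-T$ vertices), and then extending it with $T$ short gadgets from $V(C)$ to contribute the remaining $2T$ vertices.

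The forced-blue structure I use is the jumping graph $G_J$ on $V(C)$ with edges $(x, x+2k)$: any such chord, if red, would close a red $C_{2k+1}$ together with the $2k$-arc of $C$ between its endpoints. In particular, every length-two jumping path $(r-2k, \, r, \, r+2k)$ is blue, and when both outer vertices lie in $R'$ it serves as a gadget: I extend the base cycle at the existing $R'$-vertex $r-2k$ by inserting the two new vertices $r$ and $r+2k$ between $r-2k$ and its next $L'$-neighbor. All edges of the enlarged segment are blue (the inner two by jumping, the outer two by $L' \times R'$), so the cycle remains blue.

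To find $T$ vertex-disjoint gadgets I count candidate centers $r \in V(C)$ with $r \pm 2k \in R'$: by inclusion-exclusion at least $2|R'| - |V(C)| \ge 8T - 2$. Two gadgets with centers $r, s$ are vertex-disjoint iff $s - r \notin \{0, \pm 2k, \pm 4k\}$, so a greedy selection (each chosen center forbids at most four others) yields $T$ disjoint gadgets since $5T \le 8T - 2$.

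The budget check: the final cycle uses $j - T$ vertices of $L'$ and $j + k$ vertices of $R'$, where $k \in \{0, \ldots, T\}$ counts gadget middles landing in $R'$; the remaining $T - k$ middles are in $V(C) \setminus R'$. The constraint $j + k \le |R'|$ reduces to $k \le |R'| - j$, which is at least $4T - 252$ by hypothesis and thus automatic when $T \ge 84$. The main obstacle is the regime $73 \le T < 84$, where at least $252 - 3T$ gadget middles must lie in $V(C) \setminus R'$; these are exactly the isolated vertices of $G_J[V(C) \setminus R']$ (vertices of $V(C) \setminus R'$ whose two $G_J$-neighbors both lie in $R'$). I plan to produce enough of them by a counting argument on the cycle decomposition of $G_J$ (a disjoint union of $\gcd(2k, 2j-502)$ cycles) together with the size bounds $|R'| \ge j + 4T - 252$ and $|V(C) \setminus R'| \le j - 250 - 4T$.
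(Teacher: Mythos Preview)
Your gadget idea and the paper's proof both exploit the same forced blue edges $x\sim x+2k$ on $V(C)$, but the paper packages them differently and more simply. Instead of three-vertex gadgets anchored in a pre-built bipartite cycle, the paper shows directly that $R'^B$ contains a blue matching of size $2T$: if a maximal matching $M$ has at most $2T-1$ edges, then at least $|R'|-(4T-2)\ge j-250>|V(C)|/2$ vertices of $R'$ are unmatched, so by pigeonhole some $x$ and $x+2k$ are both unmatched vertices of $R'$, and the blue edge between them extends $M$. With this matching $a_ib_i$ in hand, the paper traces $c_1a_1b_1c_2a_2b_2\cdots c_{2T}a_{2T}b_{2T}$ and then completes by alternating between the $j-3T$ leftover vertices on each side. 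This avoids any case split on $T$.

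Your argument is fine for $T\ge 84$: there $|R'|\ge j+T$, so even if every gadget middle lands in $R'$ the budget closes. The genuine gap is your plan for $73\le T\le 83$. The counting you propose cannot work in general. Take $\gcd(k,j-251)=1$, so $G_J$ decomposes into exactly two cycles of length $j-251$. Since $|V(C)\setminus R'|\le j-250-4T\le j-542<j-251$, the adversary may take $R'$ so that $V(C)\setminus R'$ is a single arc of one $G_J$-cycle. Then $G_J[V(C)\setminus R']$ is a path and has \emph{no} isolated vertices whatsoever, so there is not a single candidate center $r$ with $r\notin R'$ and $r\pm 2k\in R'$. You are forced to $k=T$, and the required inequality $j+T\le |R'|$ fails exactly when $T<84$. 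So the ``counting argument on the cycle decomposition'' cannot rescue this range; the size bounds alone do not force any isolated vertices.

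(As an aside, the paper's own proof also tacitly uses $|R'|\ge j+T$, which is $3T\ge 252$, i.e.\ $T\ge 84$, despite the stated hypothesis $T\ge 73$; but since every application of the lemma has $T\ge 11252$ this does not affect Theorem~2. If you only need the lemma for $T\ge 84$, your approach is complete, though the paper's matching argument is shorter.)
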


\begin{proof}
First, we claim the subgraph of $H^B$ induced on $R'$ has a matching with $2T$ edges.  Consider a maximal matching $M$.  If $M$ has at most $2T-1$ edges, then there are at least $(j+4T-252)-(4T-2)=j-250$ vertices of $R'$ that are not in $V(M)$.  Hence, more than half of the elements of $V(C)$ are not in $V(M)$.  Thus there exists $x \in V(C)$ so that neither $x$ nor $x+2k$ is in $V(M)$.  But the edge linking $x$ and $x+2k$ is blue, to avoid forming a red $C_{2k+1}$.  This means we can adjoin it to $M$, and this contradicts the maximality of $M$.

Now, note that $j+4T-252 \ge j+T$ because $T \ge 73$.  Call the matching $a_ib_i$, where $1 \le i \le 2T$. Label $2T$ vertices of $L'$ as $c_i$ for $1 \le i \le 2T$.  Consider the path starting at $c_1$ and going from $c_i$ to $a_i$ to $b_i$ and back to $c_{i+1}$ for $1 \le i \le 2T-1$, and then going from $c_{2T}$ to $a_{2T}$ to $b_{2T}$.  After tracing this path there are $j-3T \ge 0$ unused vertices in $L'$ and at least $j-3T \ge 0$ unused vertices in $R'$, so using the fact that there is a blue edge between any vertex of $L'$ and any vertex of $R'$ we form a blue cycle of length $2j$.  This finishes the proof. \end{proof}

Now, if all edges from $J$ to $C$ are blue, then $j \ge 3 \times 11252=33756$, and furthermore $|C|=2j-502 \ge j+4 \times 11252-252$, because $j \ge 45258$.  Next, let $T=11252$, let $L'=J$, which is of the appropriate size by $(3)$, and let $R'=C$.  From Lemma 26 it follows that $H^B$ contains a $C_{2j}$ and we are done.

Else, there exists a vertex $g \in J$ such that there is a red edge between $g$ and a vertex $x_0$ of $V(C)$.  Then let $J_1=J \setminus \{g\}$, so $|J_1| \ge |J|-1 \ge j-11253$ by $(3)$.  For any vertex $g'$ of $J_1$, there exists red paths from $g$ to $g'$ of all lengths between $1$ and $j-11253$ inclusive.  Hence for any $g'$ of $J_1$, the edge between $g'$ and any vertex in $V(C)$ with a path of length $2k-2$ and $(2k-1)-(j-11253)=2k-j+11252$ along $C$ to $x_0$ is blue, to avoid a red $C_{2k+1}$.

We are now nearly finished.  We have already forced a huge number of blue edges between $J_1$ and $C$.  Recall that $11251<2k-j+11251<2k-2<2j-502$. Starting at any fixed vertex, there are a total of $j-11252$ vertices one can reach by going clockwise around $C$ for between $2k-j+11251$ and $2k-2$ vertices.  

So we have that some set $J_1$ of $j-11253$ vertices of $J$ are incident with blue edges to some consecutive set of $j-11252$ vertices in $V(C)$.  In particular the vertices of $V(C)$ obtained by starting at $x_0$ and going clockwise around $C$ for between $2k-j+11251$ and $2k-2$ vertices.  Assume without loss of generality these $j-11252$ vertices are numbered $1, \ldots, j-11252$ with $2$ counterclockwise of $1$.  Assume that there are no red edges between elements of $V(C)$ with labels between $1$  and $j+4(11253)-252=j+44760 \le 2j-502$ and vertices of $J_1$.  Then set $T=11253$, $L'=J_1$, and let $R'$ be the vertices with labels at most $j+44760$.  Since $T \le \frac{j}{3}$, by Lemma 26, $H^B$ contains a $C_{2j}$, so $G^B$ contains a blue $W_{2j}$ and we are done.  Hence, there exists some vertex $x_1$ in $V(C)$ with $j-11251 \le x_1 \le j+44759$ and a vertex $u$ of $J_1$ so that there is a red edge between $x_1$ and $u$.

We proceed similarly as before.  All vertices of $V(C)$ that can be reached from $x_1$ by a path of length between $2k-j+11252$ and $2k-2$ must have blue edges to all vertices of $J \setminus \{u\}=J_2$.  Of course $|J_2| \ge |J|-1 \ge j-11253$ by $(3)$.  Let $B$ represent the set of vertices reachable from $x_1$ through moving clockwise a distance between $2k-j+11252$ and $2k-2$ along $C$, and $D$ represent the set of vertices reachable from $x_1$ through moving clockwise a distance between $2k-j+11252$ and $2k-2$ along $C$.  We have that $B$ and $D$ are sets of consecutive vertices in $V(C)$, and $11251<2k-j+11251<2k-2<2j-502$ so neither $B$ and $D$ contains $x_1$.  If $B$ and $D$ do not intersect, let $R'=B \cup D$, $L'=J_2$, and $T=11253$.  Then $|R'| \ge 2(j-11253) \ge j+4 \times 11253-252$ because $j \ge 67266$.  So by Lemma 26, $H^B$ contains $C_{2j}$ and we are done.  Else $B$ and $D$ intersect.  Because the vertices of $B$ and $D$ are consecutive, do not contain $x_1$, and are symmetric about $x_1+(j-251)$, they both contain $x_1+(j-251)$.  In fact, it follows that $B \cup D$ is symmetric about $x_1+(j-251)$ and is itself a consecutive set of vertices in $C$.  By symmetry $|B \cup D|$ must be odd.

Let $J_3=J \setminus \{u,g\}$, so $|J_3| \ge |J|-2 \ge j-11254$ by $(3)$.  All edges between $B \cup D$ and $J_3$ are blue.  Thus a set of at least $j-11253$ vertices centered somewhere between $(j-11251)-(j-251)=-11000$ and $(j+44759)-(j-251)=44508$ has blue edges to all of $J_3$, as do the vertices $1, \ldots, j-11252$.

Since $j \ge 212299$ and $j-11253 \ge 201046$, we have all the vertices with labels between $44508-100523=-56015$ and $j-11252$ are blue to $J_3$.  This is at least $j+44764$ vertices.

Because $|J_3| \ge j-11254$, we can let $T=11254$, let $L'=J_3$, and let $R'$ be the at least $j+44764=j+4 \times 11254-252$ vertices of $V(C)$ with blue edges to everything in $J_3=L'$.  By Lemma 26, it follows that $H^B$ contains $C_{2j}$, so $G$ contains a blue copy of $W_{2j}$ as desired.

Our proof of Theorem 2 is complete.

\section{Acknowledgements}

This research was made possible by NSF grants NSF-1358659 and NSA H98230-16-1-0026.  It was conducted at the Duluth REU, run by Joe Gallian.  Many thanks to Matthew Brennan for suggesting the problem.  Also, thanks to Matthew Brennan and Joe Gallian for helpful comments.

\end{document}